\newtheorem{thm}{Theorem}[section]
\newtheorem{lem}[thm]{Lemma}
\newtheorem{prop}[thm]{Proposition}
\newtheorem{cor}[thm]{Corollary}
\newcommand{\newtheorembox}[2]{\newtheorem{#1-boxless}[thm]{#2}\newenvironment{#1}{\begin{#1-boxless}}{\qed\end{#1-boxless}}}
\theoremstyle{definition}
\newcommand{\an}{{\mathrm{an}}}
\newcommand{\CC}{\mathbb C}
\newcommand{\defi}[1]{\textbf{#1}}
\newcommand{\GL}{\mathit{GL}}
\newcommand{\Gm}{\mathbb G_m}
\newcommand{\QQ}{\mathbb Q}
\newcommand{\RR}{\mathbb R}
\newcommand{\ZZ}{\mathbb Z}
\title{Embeddings and immersions of tropical curves}
\subjclass{14T05}
\keywords{Tropical curves, metric graphs, embeddings, immersions, crossing number, faithful tropicalizations, Newton polygons}
\author[Cartwright \and Dudzik \and Manjunath \and Yao]{Dustin Cartwright \and Andrew Dudzik \and Madhusudan Manjunath \and Yuan Yao}
\address{Department of Mathematics \\ University of Tennessee \\
         227 Ayres Hall \\ Knoxville, TN 37996}
\email{cartwright@utk.edu}
\address{Department of Mathematics \\ University of California \\
    970 Evans Hall \\ Berkeley, CA 94720}
\email{adudzik@math.berkeley.edu}
\address{Department of Mathematics \\ University of California \\
    970 Evans Hall \\ Berkeley, CA 94720
    }
\email{madhu@berkeley.edu}
\address{Department of Mathematics \\ University of Texas \\ 
    1 University Station C1200, Austin, TX 78712}
\email{yyao@math.utexas.edu}
\begin{document}

\begin{abstract}
We construct immersions of trivalent abstract tropical curves in the Euclidean plane and embeddings of all abstract tropical curves in higher dimensional Euclidean space. Since not all curves have an embedding in the plane, we define the tropical crossing number of an abstract tropical curve to be the minimum number of self-intersections, counted with multiplicity, over all its immersions in the plane. We show that the tropical crossing number is at most quadratic in the number of edges and
this bound is sharp.  For curves of genus up to two, we systematically compute the crossing number. Finally, we use our immersed tropical curves to
construct totally faithful nodal algebraic curves via lifting results of Mikhalkin and Shustin.
\end{abstract}

\maketitle

\section{Introduction}

Tropical geometry has been studied in two different flavors. One is the abstract
version that views a tropical variety as a skeleton of a Berkovich
analytification of an algebraic variety defined over a non-archimedean field.
The second is the embedded version that studies tropicalization of a variety
embedded in the algebraic torus or in projective space.  In this paper, we
examine the combinatorial relationship between these two different views.
First, we show that an abstract tropical curve can always be represented as an
embedded one:

\begin{thm}\label{thm:immersion-embedding}
Let $\Gamma$ be an abstract tropical curve and suppose that $d$ is the largest
degree of a vertex of~$\Gamma$. Then $\Gamma$ has a smooth embedding in $\RR^n$
when $n$ is at least $\max\{3, d-1\}$ and $\Gamma$ has an immersion in~$\RR^2$
if $d$ is at most~$3$.
\end{thm}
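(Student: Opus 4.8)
The plan is to realize $\Gamma$ as a balanced rational polyhedral complex in Euclidean space: a map sending each vertex to a point and each edge to a straight segment with primitive integer direction (possibly subdividing an edge and letting it bend at auxiliary trivalent vertices, each carrying an unbounded ray, when a single segment will not do), so that the balancing condition holds at every vertex. Here smoothness means that every vertex is modeled on the standard star $e_1, \ldots, e_{d-1}, -(e_1 + \cdots + e_{d-1})$ in $\RR^{d-1}$; these $d$ primitive vectors sum to zero and any $d-1$ of them are linearly independent, so a vertex of degree $k \le d$ can be smoothly realized using $k$ of these directions. This is exactly where the bound $d-1$ comes from, and for $d \le 3$ the whole star already lives in $\RR^2$.

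First I would treat the choice of directions as a purely combinatorial problem on $\Gamma$: assign to each oriented edge a primitive integer vector so that the outgoing vectors at each vertex sum to $0$. Taken one coordinate at a time, this says that each coordinate defines a circulation on the graph, so the unbounded rays (which I allow to carry arbitrary directions, recording a Newton polygon) supply exactly the slack needed to extend the local standard stars to a globally consistent balanced assignment, with every edge direction nonzero and the directions at each vertex in general position.

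Next I would fix a spanning tree, place its vertices by following the assigned tree directions with chosen positive lengths, and then close up the $g$ independent cycles coming from the non-tree edges. Each such edge forces a balancing direction at both of its endpoints, while the segment between its already-placed endpoints must point in that direction; I would absorb the resulting closing conditions $\sum_{e} \ell_e \hat u_e = 0$ by carrying enough free parameters, namely the orientations of the stars, the positions of the bend vertices, and the segment lengths, so as to solve them with all $\ell_e$ positive. This is the step I expect to be the \emph{main obstacle}: meeting the balancing condition at both ends of every edge while simultaneously closing every cycle, and keeping all directions primitive and in general position, is where the genus of $\Gamma$ genuinely enters, and it is the one place where a naive tree layout is over-constrained.

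Finally I would deal with injectivity. For $n \ge 3$ the image is one-dimensional, so two edges with no common vertex meet in expected dimension $1 + 1 - n < 0$; hence a generic choice inside the positive-dimensional space of valid realizations is injective, and the general-position stars upgrade this to a smooth embedding. When $n = 2$ and $d \le 3$ I would drop the demand for global injectivity: the standard trivalent star already makes the map a local embedding, so it is an immersion and self-crossings are simply permitted.
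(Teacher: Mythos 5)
Your overall framework (standard smooth local models, subdividing edges into bent lattice paths whose bend vertices carry added unbounded rays, genericity for injectivity when $n \ge 3$) matches the paper's, but your argument has a genuine gap at precisely the step you yourself flag as the \emph{main obstacle}, and the gap is fatal to the proof as structured. You choose all edge directions first, as a global balanced assignment, and only afterwards try to find positions and lengths; this forces you to solve the cycle-closing system $\sum_e \ell_e \hat u_e = 0$, and moreover to solve it subject to a constraint you never state: since an embedding must be isometric to a tropical modification of~$\Gamma$, the lattice lengths of the segments subdividing an edge must sum to the \emph{prescribed} length of that edge, so the $\ell_e$ are not free positive parameters. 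You give no argument that this constrained system is solvable, only the hope that there are ``enough free parameters,'' and your suspicion that ``the genus genuinely enters'' here is a symptom of the wrong order of construction rather than a real feature of the problem.

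The paper's proof shows the problem is purely local, so that no global system ever appears. One places the finite vertices of~$\Gamma$ at generic points of a small cube $[0,L]^n$ with $nL$ less than every edge length, sends the edges at each vertex into distinct directions among $e_0,\dots,e_n$ (where $e_0 = -e_1-\cdots-e_n$), and adds unbounded rays in the unused directions, so each original vertex is balanced by modification rather than by its finite edges. Then each edge is routed \emph{independently}: its two endpoints are already placed, and one connects them by a zigzag with segments parallel to the basis $e_i,\dots,e_{i+n-1}$, inserting a back-and-forth detour (a positive and a negative multiple of the same direction, at the cost of extra bend vertices with rays) so that the total lattice length is \emph{exactly} the prescribed~$\ell$; this is possible because the endpoints lie within lattice distance at most $nL < \ell$ of each other. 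Since every bend vertex is balanced by its own added ray, balancing never couples distinct edges, cycles impose no condition, and genus plays no role. Two smaller points: your dimension count for $n \ge 3$ is fine in spirit but vacuous until the space of realizations is shown to be nonempty, and for $n = 2$ you cannot simply ``permit self-crossings'' --- an immersion must be a nodal curve, so every crossing has to be a transverse intersection of two segments away from all vertices, which still requires the perturbation argument that the paper carries out.
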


The conditions in Theorem~\ref{thm:immersion-embedding} are as sharp as possible
because the local model for a degree~$d$ vertex only embeds in~$\RR^{d-1}$, so
it is not possible to have an embedding or immersion in $\RR^n$ when $n < d-1$.
Moreover, while some abstract tropical curves can be embedded in~$\RR^2$, some
cannot, for example, because there might not even be an embedding of the
underlying graph. However, by Theorem~\ref{thm:immersion-embedding}, we always
have an immersion of an abstract tropical curve in~$\RR^2$ and we define the
\defi{tropical crossing number} to be the minimum number of crossings, counted
with tropical multiplicities, over all possible planar immersions. The tropical
crossing number is always bounded below by the graph-theoretic crossing number
of the underlying graph, which is the minimum number of crossings among all
planar immersions of the graph. Our proof of the existence of immersions also
bounds the tropical crossing number:

\begin{thm}\label{thm:immersion-bound}
If $\Gamma$ is a trivalent abstract tropical curve with $e$ edges, then the
tropical crossing number of~$\Gamma$ is at most $O(e^2)$.
\end{thm}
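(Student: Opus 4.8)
The plan is to take the planar immersion produced in the proof of Theorem~\ref{thm:immersion-embedding} and bound the contributions of its self-intersections directly. Recall that the tropical crossing number of a fixed immersion $\phi\colon\Gamma\to\RR^2$ is $\sum_{p}|\det(u_p,w_p)|$, where the sum runs over the transverse self-intersection points $p$ of the image and $u_p,w_p$ are the primitive integer direction vectors of the two branches meeting at $p$; the immersion of Theorem~\ref{thm:immersion-embedding} can be taken with all edge weights equal to one, so no additional weight factors appear. Thus two quantities must be controlled: the number of self-intersection points, and the individual multiplicities $|\det(u_p,w_p)|$.

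From the construction underlying Theorem~\ref{thm:immersion-embedding} I would extract two features. First, each edge of $\Gamma$ is immersed as a concatenation of boundedly many straight segments, so that the image $\phi(\Gamma)$ is a union of $O(e)$ segments in the plane. Second, the primitive direction of every one of these segments belongs to a single finite set $S\subset\ZZ^2$ of primitive vectors that does not depend on $\Gamma$, since the construction draws its local slopes from a fixed repertoire. After a generic tropical perturbation we may also assume that the only coincidences among the segments are transverse double points, so that any two of the $O(e)$ segments meet in at most one point.

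Granting these two features the count is immediate. The number of self-intersection points is at most the number of pairs of segments, namely $\binom{O(e)}{2}=O(e^2)$. For each such point $p$ the two branches have directions $u_p,w_p\in S$, so $|\det(u_p,w_p)|\le\max_{u,w\in S}|\det(u,w)|=:C$, a universal constant. Summing, the tropical crossing number of this immersion---and hence the tropical crossing number of $\Gamma$---is at most $C\cdot\binom{O(e)}{2}=O(e^2)$, which is the desired bound.

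The main obstacle is establishing the two structural features simultaneously, and this is really where the work lies. A careless immersion can fail the bound in two opposite ways: routing an edge through many turns inflates the number of segments, pushing the crossing count toward $e^{4}$, while spreading the slopes over large primitive vectors inflates the determinants $|\det(u_p,w_p)|$, so that even $O(e^2)$ crossings could carry super-quadratic total multiplicity. The crux is therefore to verify that the existence construction of Theorem~\ref{thm:immersion-embedding} can be arranged with both $O(1)$ segments per edge and slopes confined to a fixed finite set $S$; in particular the balancing condition at each trivalent vertex must be satisfiable using only vectors from $S$, and the genericity step restoring transversality must preserve both integrality of slopes and the immersion property. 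Once this is in place, the quadratic bound follows purely from the elementary segment count above.
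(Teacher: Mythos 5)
Your proposal is correct and is essentially the paper's own argument: the construction behind Theorem~\ref{thm:immersion-embedding} has exactly the two structural features you require, since each edge of $\Gamma$ is first replaced by at most $3$ edges (to remove loops and parallel edges), each of which is then drawn as $4$ segments plus $3$ infinite rays, all parallel to $e_0$, $e_1$, or~$e_2$. Consequently the image consists of $O(e)$ segments drawn from a fixed set of three primitive directions, so every pairwise intersection multiplicity is at most~$1$ and the total crossing number is $O(e^2)$, just as you describe.
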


The quadratic bound in Theorem~\ref{thm:immersion-bound} is optimal up to a
constant factor, because the graph-theoretic crossing number of a random cubic
graph grows quadratically in the number of
edges~\cite[p.~143--144]{richter-salazar}. However, we show that the gap between
the two crossing numbers can itself grow quadratically, for an explicit family
of tropical curves:

\begin{thm}\label{thm:large-crossing-number}
There exists a family of trivalent abstract tropical curves with $e$ edges,
whose underlying graph is planar, but whose crossing number is $\Theta(e^2)$.
\end{thm}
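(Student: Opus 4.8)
The plan is to produce an explicit family $\{\Gamma_n\}$ of trivalent abstract tropical curves with $e=\Theta(n)$ edges whose underlying graphs are planar, and then to sandwich the tropical crossing number between $\Omega(e^2)$ and $O(e^2)$. The upper bound is free from Theorem~\ref{thm:immersion-bound}, and planarity of the underlying graph will be verified by exhibiting an explicit plane drawing, which immediately gives graph-theoretic crossing number $0$; thus the entire gap, and the whole difficulty, lies in the lower bound that \emph{every} planar immersion of $\Gamma_n$ has $\Omega(e^2)$ crossings counted with multiplicity.

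The engine for the lower bound is a lattice-point reduction. Using the dual subdivision of a planar immersion together with Pick's theorem, I would first establish that for any immersion $\phi\colon\Gamma\to\RR^2$ with Newton polygon $\Delta$, the number of crossings counted with tropical multiplicity is at least $p_a(\Delta)-g$, where $p_a(\Delta)$ is the number of interior lattice points of $\Delta$ and $g$ is the genus of $\Gamma$; for a generic immersion with unit weights and smooth vertices one gets equality. Since a trivalent graph automatically has $g=O(e)$, minimizing over immersions reduces the theorem to the purely combinatorial-geometric claim that \emph{every} realizable Newton polygon $\Delta$ satisfies $p_a(\Delta)=\Omega(e^2)$. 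Because $\Delta$ is a lattice polygon, this in turn follows once one shows that $\Delta$ is forced to have lattice width $\Omega(e)$ in two transverse directions, as such a convex polygon has area, and hence interior lattice point count, of order $e^2$.

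To force the two large transverse widths I would assemble $\Gamma_n$ from $\Theta(n)$ rigid gadgets, choosing the edge lengths of each gadget so that the balancing condition pins down the primitive directions of its edges up to bounded ambiguity, and chaining the gadgets around the cycles of $\Gamma_n$ so that their rays are driven to spread across two independent half-planes of directions. The key point is that the lattice width of $\Delta$ in a direction $\xi$ is the total (rotated) extent of the rays on the $\xi$-positive side, and extents add rather than cancel; so once $\Omega(n)$ rays are pinned to directions with a definite sign against each of two functionals, both widths are $\Omega(n)=\Omega(e)$, and the reduction above yields crossing number $\Omega(e^2)$. Combined with Theorem~\ref{thm:immersion-bound} this gives $\Theta(e^2)$.

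The main obstacle is precisely that the crossing number is an infimum over \emph{all} immersions, so I cannot merely display one layout with a large Newton polygon: I must rule out every clever immersion that collapses the rays into a narrow cone to make $\Delta$ thin. I expect this to require a holonomy-type computation showing that no assignment of primitive directions and integer expansion factors consistent with balancing can simultaneously close up all cycles of $\Gamma_n$ and keep $\Delta$ inside a thin strip. The most delicate part will be robustness of this forcing against the two ways an immersion might try to cheat the lattice-point bound — large expansion factors on edges and non-generic, non-transverse intersections — and verifying that neither can lower the count below $p_a(\Delta)-g$ is where I anticipate spending most of the effort.
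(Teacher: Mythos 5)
Your first reduction coincides with the paper's: for any planar immersion, the number of crossings counted with multiplicity equals the number of interior lattice points of the Newton polygon minus $g$ (this is Proposition~\ref{prop:interior-points}, proved with Pick's theorem exactly as you describe), and the upper bound is Theorem~\ref{thm:immersion-bound}. But from there the proposal has two genuine problems. The smaller one is that your lattice-point reduction is false as stated: width $\Omega(e)$ in two transverse directions does \emph{not} force area $\Omega(e^2)$, nor does large area force many interior points. The parallelogram with vertices $(0,0)$, $(1,0)$, $(N+1,N)$, $(N,N)$ has width at least $N$ in both coordinate directions, yet has area $N$ and \emph{zero} interior lattice points. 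What is actually needed is that the lattice width --- the minimum of the directional widths over \emph{all} nonzero integer functionals --- be $\Omega(e)$; the paper then passes to the convex hull $\Delta^{(1)}$ of the interior points via \cite{castryck-cools}, bounds its area below by $\tfrac{3}{8}(w-2)^2$ using \cite{toth-makai}, and applies Pick's formula to $\Delta^{(1)}$ (Proposition~\ref{prop:gonality}).

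The larger problem is that the entire content of the theorem --- ruling out \emph{every} immersion with a thin Newton polygon --- is precisely what you defer to an unspecified ``holonomy-type computation,'' and the mechanism you sketch for it cannot work as described. An immersion of $\Gamma$ is by definition a nodal curve isometric to a tropical \emph{modification} of $\Gamma$: any edge may be immersed as a zigzag with arbitrarily many direction changes, each bend balanced by an unbounded ray that the modification adds. Consequently, balancing at your ``rigid gadgets'' pins down nothing about primitive directions locally; the obstruction must be global, and the genericity of edge lengths has to enter in a structural way. The paper's substitute for your holonomy computation is divisorial gonality, an invariant of the abstract curve that is independent of the chosen immersion: by Lemma~\ref{lem:stable} and Proposition~\ref{prop:hyperplane-section}, the stable intersection of any immersion with a line of rational slope is an effective divisor of rank at least $1$ whose degree is the width of $\Delta$ in that direction, so gonality $\leq$ lattice width for \emph{every} immersion. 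Taking the family to be the chains of loops with bridges and generic edge lengths, which are Brill--Noether general by \cite{cdpr} and \cite{jensen-payne} and hence have gonality $\lceil g/2 \rceil + 1 = \Omega(e)$, one gets lattice width $\Omega(e)$ for every immersion, and then crossing number at least $\tfrac{3}{8}(d-2)^2 - g + \tfrac{1}{2} = \Omega(e^2)$ (Corollary~\ref{cor:chain-loops}). Your instinct that generic edge lengths supply the needed rigidity is correct, but the rigidity they buy is divisor-theoretic, not a pinning of edge directions.
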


The family in Theorem~\ref{thm:large-crossing-number} is the chain of loops with
bridges and generic edge lengths as in~\cite{jensen-payne}, where the bridges 
are necessary so that the graph is trivalent and therefore has a
finite crossing number. We use the result
from~\cite{cdpr} that these abstract tropical curves are Brill-Noether general,
or, more specifically, have the divisorial gonality of a Brill-Noether general
curve.
Any such family of abstract tropical curves would also have quadratic tropical crossing number.

Having looked at the asymptotic behavior of tropical crossing numbers for large
graphs, we then turn to the small graphs, specifically of genus at most~$2$.
Perhaps surprisingly, in Proposition~\ref{prop:genus-2}, we give a case of an
abstract tropical curve which has a planar embedding for generic values of the
metric parameters, but not for specializations. As a consequence, the crossing
number is neither lower nor upper semi-continuous in the metric parameters (see
Remark~\ref{rmk:semicontinuity}).

A complementary analysis of curves with tropical crossing number~$0$, going up
to genus~$5$ has been undertaken in~\cite{bjms}. They describe the closure of
the locus of curves with crossing number~$0$ inside the moduli space of all
curves, in terms of equalities and inequalities on the metric parameters of any
given graph.
Their
techniques are computational and demonstrate that there are effective and
practical algorithms for classifying abstract tropical curves with a given crossing number.

Using the realizability results of Mikhalkin and Shustin, we have the following
application to algebraic curves and their analytifications.
\begin{thm}\label{thm:faithful-immersion}
Let $K$ denote the field of convergent Puiseux series with complex coefficients.
If $\Gamma$ is a trivalent abstract tropical curve with rational edge lengths,
then there exists a $K$-curve~$C$, such that the minimal skeleton of the
Berkovich analytification $C^\an$ is isometric to~$\Gamma$, and there is an
immersion from an open subset $C' \subset C$ to $\Gm^2$ with totally faithful
tropicalization.
\end{thm}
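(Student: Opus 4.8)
The plan is to combine the immersion construction from Theorem~\ref{thm:immersion-embedding} with the realizability results of Mikhalkin and Shustin, which lift a suitable planar tropical curve to an algebraic curve over $K$ whose tropicalization recovers the given tropical curve. The overall strategy runs in two stages: first produce a good planar immersion of $\Gamma$, and then invoke a lifting theorem to upgrade that combinatorial picture to an honest algebraic immersion $C' \to \Gm^2$ with the faithfulness property.

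\medskip

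First I would apply Theorem~\ref{thm:immersion-embedding} to the trivalent curve $\Gamma$ to obtain an immersion $\Gamma \to \RR^2$. Since $\Gamma$ has rational edge lengths, and after a suitable rescaling we may assume the immersion is realized with integer edge directions and integer (balanced) weights at each vertex, this immersed curve is the underlying set of a plane tropical curve in the usual sense: each edge carries a primitive integer direction and a positive integer weight, and the balancing condition holds at every trivalent vertex. The content here is to check that the immersion produced in Theorem~\ref{thm:immersion-embedding} can be taken with these integrality and balancing properties, and that the immersion is moreover \emph{totally faithful} in the sense required, meaning that the tropicalization of the lifted curve restricts to an isometry onto the minimal skeleton and that no two distinct points of the skeleton are identified under $\Trop$ except at the immersion's self-crossings. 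Because $\Gamma$ is trivalent and the immersion is transverse, the self-intersections are ordinary nodes, and away from these the map is injective.

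\medskip

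Next I would feed this balanced weighted plane tropical curve into the lifting results of Mikhalkin (correspondence via Newton subdivisions) and Shustin (patchworking / refined realizability for curves with prescribed Newton polygon and nodal singularities). These results guarantee the existence of a $K$-curve $C$, together with a map to $\Gm^2$, whose tropicalization is exactly the given plane tropical curve; the nodes of the tropical immersion lift to nodes of the algebraic image, so the image of $C'$ is a nodal algebraic curve in $\Gm^2$ and the map from the normalization restricted to an open subset is an immersion. The faithfulness of the tropicalization—that the minimal skeleton of $C^\an$ is isometric to $\Gamma$ via $\Trop$—follows because the tropical curve we lift is smooth (trivalent, unit weights on the skeletal edges) and carries the correct edge lengths; one then appeals to the theory of faithful tropicalization (in the style of Baker--Payne--Rabinoff) to conclude that the tropicalization map is an isometry on the skeleton. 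The rationality of the edge lengths is used precisely to match the lattice-length normalization built into the tropical-algebraic correspondence over $K$.

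\medskip

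The main obstacle I expect is reconciling the immersion of Theorem~\ref{thm:immersion-embedding}, which is constructed purely combinatorially and need not a priori respect the integrality and balancing demanded by Mikhalkin--Shustin, with the hypotheses of those lifting theorems. Concretely, the edge directions produced by the immersion must be rational (hence primitive integer after scaling), the weights must be chosen so that balancing holds at each trivalent vertex, and the resulting Newton polygon must admit a regular subdivision dual to the immersed curve so that Shustin's patchworking applies. Ensuring simultaneously that the self-intersections are transverse ordinary nodes, that the skeletal edges receive weight one (so the skeleton is faithfully recovered), and that the global balancing can be achieved with these constraints is the delicate point; it is here that the specific trivalent structure and the freedom in choosing edge directions in the proof of Theorem~\ref{thm:immersion-embedding} must be exploited. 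Once a balanced, unimodular, transverse planar model is in hand, the lifting and faithfulness conclusions are essentially formal consequences of the cited results.
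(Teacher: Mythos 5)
Your proposal is correct and follows essentially the same route as the paper: apply Theorem~\ref{thm:immersion-embedding} to obtain a planar immersion, use the rational edge lengths to arrange rational vertex coordinates (Remark~\ref{rmk:immersion-embedding-extras}), and then lift the resulting nodal plane tropical curve via Shustin's algebraic version of Mikhalkin's correspondence theorem. The ``main obstacle'' you flag is actually vacuous: integrality of directions, unit weights, and balancing are automatic because an immersion is by definition a nodal tropical curve (Definition~\ref{def:nodal}), i.e., locally a standard smooth or nodal model up to $\GL_2(\ZZ)$, and the existence of a dual regular subdivision of the Newton polygon is exactly the structure theorem of Richter-Gebert--Sturmfels--Theobald that the paper cites, so no separate verification is needed before invoking Shustin's lifting.
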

\noindent The field of convergent Puiseux series in
Theorem~\ref{thm:faithful-immersion} refers to the subfield of the field of
formal Puiseux series $\CC\{\!\{t\}\!\}$ consisting of those series which
converge for sufficiently small values of~$t$. Thus, the $K$-curve from
Theorem~\ref{thm:faithful-immersion} can be explicitly specialized to a curve
over~$\CC$ by choosing a sufficiently small value of~$t$.

The totally faithful tropicalization appearing in
Theorem~\ref{thm:faithful-immersion} is a strengthening of the faithful
tropicalizations studied in~\cite{bpr} and~\cite{grw}, but for nodal curves
rather than smoothly embedded curves.  Baker, Payne, and Rabinoff proved 
that for any algebraic curve~$C$ and a skeleton of its analytification~$C^\an$,
there exists an embedding of an open dense subset $C \supset C' \rightarrow
\Gm^n$, such that the projection of $(C')^\an$ to its tropicalization is an
isometry on its skeleton~\cite[Thm.~1.1]{bpr}. Totally faithful tropicalizations
were defined
in~\cite{log-deformation} as the strengthening where the
tropicalization is required to be an isometry also on the skeleton of~$C'$,
which contains additional unbounded edges for every point in $C \setminus C'$.
We work with the nodal version of a totally faithful tropicalization, where $C'
\rightarrow \Gm^2$ is only an immersion, and the tropicalization correspondingly
also has nodes. The analogue of Theorem~\ref{thm:faithful-immersion}
for embeddings in higher dimensions appears in~\cite{log-deformation}, and
combines our results on embeddings of tropical curves with their work on
realizations of such curves.

The rest of the paper is organized as follows. In
Section~\ref{sec:embeddings-immersions} we prove
Theorems~\ref{thm:immersion-embedding} and~\ref{thm:immersion-bound} on the
existence of embeddings and immersions of tropical curves.
Section~\ref{sec:gonality} proves Theorem~\ref{thm:large-crossing-number} by
relating the crossing number to the divisorial gonality.
Section~\ref{sec:classlowgenus} contains a study of crossing numbers for graphs
of genus at most~$2$, and Section~\ref{sec:realizations} proves
Theorem~\ref{thm:faithful-immersion}, applying our results to realizability
questions for algebraic curves.

\subsection*{Acknowledgments}
Our work was initiated in the Mathematics Research Communities (MRC) program on
``Tropical and Non-Archimedean Geometry'' held in Snowbird, Utah in Summer 2013.
We thank the MRC as well as the organizers of our program, Matt Baker and Sam
Payne, for their support and guidance during and after the program.
We would also like to thank Mandy Cheung,
Lorenzo Fantini, Jennifer Park, and Martin Ulirsch for sharing their results
from the same workshop on log deformations of curves~\cite{log-deformation}, which helped to motivate
our work.  We would also like to acknowledge Melody Chan and Bernd Sturmfels for several interesting discussions.

Madhusudan Manjunath was supported by a Feoder-Lynen Fellowship of the Humboldt Foundation and an AMS-Simons Travel Grant during this work.

\section{Embeddings and immersions of tropical
curves}\label{sec:embeddings-immersions}

In this section, we present the construction of the smooth planar immersion and
embedding of tropical curves in $\mathbb{R}^n$.  We start by recalling the
definitions of abstract and embedded tropical curves, as well as the relation
between the two.

\begin{defn}
An \defi{abstract tropical curve} is a finite connected graph possibly with
loops or multiple edges together with
either a positive real number or infinity attached to each edge, which will be
known as the \defi{length} of the edge. Any edge with infinite length must have
a degree~$1$ vertex at one of its endpoints, which will be referred to as an
\defi{infinite vertex}.

A \defi{subdivision} of an abstract tropical curve consists of replacing an edge
with two consecutive edges whose lengths add up to the length of the original
edge. If the original edge was infinite then the subdivided edge incident to the
infinite vertex must also be infinite.
Two tropical curves are \defi{equivalent} if one can be
transformed into the other by a series of subdivisions and reverse subdivisions.
\end{defn}

Our definition of an abstract tropical curve is based on the one
in~\cite{mikhalkin}, but slightly more general because we do not require all
$1$-valent vertices to be infinite. Because of this, our definition is
equivalent to that in~\cite[Sec.~2.1]{abbr}.

\begin{rmk}\label{rmk:curve-metric-space}
The underlying graph of a tropical curve has a natural realization as a
topological space and the lengths along the edges additionally give a metric on
this realization, away from the infinite vertices. This metric realization gives
an alternative characterization of abstract tropical curves up to equivalence as inner
metric spaces which have a finite cover by open sets isometric to star shapes.
See, for example, \cite[Sec. 2.1]{abbr}, for a definition from this perspective.
\end{rmk}

We will also consider a coarsening of the above equivalence for tropical modification. An \defi{elementary tropical modification} is formed
by adding an infinite edge at a finite vertex. A \defi{tropical modification} is any sequence of elementary tropical modifications,
subdivisions, and reverse subdivisions.

\begin{figure}
\includegraphics{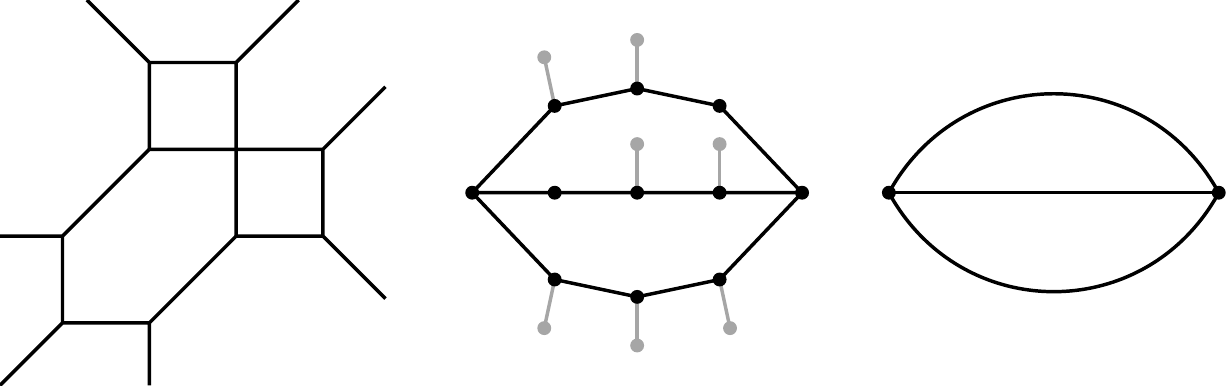}
\caption{The immersed tropical curve on the left gives rise to the abstract
tropical curve in the center. The vertices and edges shown in gray are infinite
and the other edges all have finite lengths, which happen to be equal. This
abstract tropical curve is a tropical modification of the curve on the right,
in which all edge lengths are equal to $4$ times the finite edge lengths of its
tropical modification. We will refer to the
underlying graph of this last curve as a theta graph.}
\label{fig:theta-example}
\end{figure}

\begin{ex}\label{ex:theta-modification}
In the center of Figure~\ref{fig:theta-example} is an abstract tropical curve,
which is a tropical modification of the tropical curve on
the right of that figure. This tropical modification is obtained by first
subdividing each of the edges of the latter graph into 4 edges of equal length
and then attaching infinite edges to 7 of the 9 newly created vertices.
In Example~\ref{ex:theta-immersion}, we will see that the leftmost diagram in
Figure~\ref{fig:theta-example} gives an immersion of the modified graph.
\end{ex}

We now turn to the embedded side and define smooth and nodal tropical curves
in~$\RR^n$. For any integer~$d$ in the range $2 \leq d \leq n+1$, the standard
smooth model of valence~$d$ is the union of the $d$ rays generated by the
coordinate vectors $e_1$ through $e_{d-1}$ and the vector $-e_1-\cdots
- e_{d-1}$. Up to changes of coordinates in $\GL_n(\mathbb{Z})$ these are exactly the $1$-dimensional
matroidal fans from~\cite{ardila-klivans} and such fans form the building blocks
of tropical manifolds~\cite[Def.~1.14]{mikhalkin-zharkov}.

\begin{defn}\label{def:smooth}
A \defi{smooth tropical curve} in~$\RR^n$ will be a union of finitely many
segments and rays such that a neighborhood of any point is equal to a
neighborhood of a standard model, after a translation and a change of
coordinates taken from $\GL_n(\ZZ)$.
\end{defn}

For nodal curves, we have an additional local model. Recall that in algebraic
geometry, a nodal curve singularity is one that is analytically isomorphic to a
union of two distinct lines meeting at a point. For tropical curves, the local model for
a node will analogously consist of two distinct (classical) lines with rational slopes,
passing through the origin in~$\RR^2$. We will only ever consider nodal curves
in the plane.

\begin{defn}\label{def:nodal}
A \defi{nodal tropical curve} in~$\RR^2$ is a union of finitely many segments and rays which is
locally equal to either to a standard smooth local model or a nodal local model, again up
to translation and the action of~$\GL_2(\ZZ)$.
\end{defn}

The embedded and abstract tropical curves are related in that any embedded curve
gives rise to an abstract one, as we now explain. By definition, each smooth
tropical curve is a union of segments and rays, so if we add a vertex ``at
infinity'' for each unbounded ray, we naturally get a finite graph. Thus, it
only remains to assign lengths to the edges of this graph. In the standard local model, since we only allow
changes of coordinates in $\GL_n(\ZZ)$, any segment is parallel
to an integer vector.
In other words, if $p_1$ and~$p_2$ are the endpoints of a segment, then
$p_1 - p_2 = \alpha v$, where $\alpha$ is a positive real number and $v$ is a
primitive integral vector i.e., $v \in
\ZZ^n$ and the entries have no common divisor. 
Then $\alpha$ is uniquely determined and we use it as the
length of the edge. This is the same metric used in Mikhalkin's enumerative
results~\cite[Rmk.~2.4]{mikhalkin}.

\begin{figure}
\includegraphics{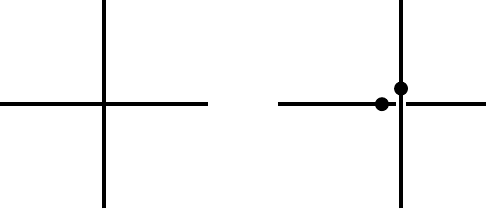}
\caption{Procedure for resolving the nodal singularities in a plane curve. On
the left is a neighborhood of a node in an embedded tropical curve and on the
right, its resolution as an abstract tropical curve, which includes two vertices
and the ends of four edges.}
\label{fig:resolution}
\end{figure}

For nodal curves in~$\RR^2$, we use the procedure as above for edges and smooth
vertices. For the local model consisting of two lines passing through the
origin, the abstract tropical curve has two vertices which map to the origin,
one of which is an endpoint for the edges corresponding to one line and the other corresponding to the
other line. This is illustrated in Figure~\ref{fig:resolution}.

\begin{defn}
Let $\Gamma$ be an abstract tropical curve. An \defi{embedding} in~$\RR^n$
(resp.\ an \defi{immersion} in~$\RR^2$)
is a smooth (resp.\ nodal) tropical curve in $\RR^n$ (resp.\ $\RR^2$) which is isometric to a tropical
modification of~$\Gamma$.
\end{defn}

\begin{ex}\label{ex:theta-immersion}
The leftmost curve of Figure~\ref{fig:theta-example} is a nodal tropical curve
with a single node. After resolving the node, we obtain the abstract tropical
curve in the center, so the leftmost curve is an immersion of the center curve. Although the segments in the immersed curve have different
lengths in the Euclidean metric, each finite edge of the realized graph has the
same length.

Moreover, as we saw in Example~\ref{ex:theta-modification}, the center abstract
tropical curve is a tropical modification of the curve on the right, and thus
the leftmost curve is also an immersion of the rightmost abstract tropical
curve.
\end{ex}

We define the notion of multiplicities and then present a proof of
Theorems~\ref{thm:immersion-embedding} and~\ref{thm:immersion-bound}. Recall from Definition~\ref{def:nodal} that a neighborhood of
a nodal point of a tropical curve is the union of two lines. As in the
construction of the edge lengths, we can assume that these lines are parallel to
integral vectors $v = (v_1, v_2)$ and $u = (u_1, u_2)$ respectively, and that
these integral vectors are primitive. Then the multiplicity of this node is
\begin{equation}\label{eq:node-multiplicity}
\left\lvert \det \begin{pmatrix} v_1 & u_1 \\ v_2 & u_2
\end{pmatrix}\right\rvert
\end{equation}
This determinant is equivalent to the multiplicity of the stable intersection of
the two lines, such as in~\cite[Def.~3.6.5]{maclagan-sturmfels}.
We now prove our main theorems of this section.

\begin{proof}[Proof of Theorems~\ref{thm:immersion-embedding}
and~\ref{thm:immersion-bound}]
Fix $n \geq 2$ and let $\Gamma$ be a graph whose vertices have degree at most
$n+1$. We will construct an embedding (if $n \geq 3$) or an immersion (if $n =
2$) of~$\Gamma$, as follows.

If $\Gamma$ has any loops or parallel edges, we first subdivide them to obtain an equivalent graph
which does not have loops or parallel edges. We then label each vertex-edge incidence of~$\Gamma$
with an integer from $0$ to~$n$ inclusive such that the labels are distinct at
each vertex, and along each edge, the labels at the endpoints differ by~$\pm 1$,
modulo $n+1$. The first part of this condition can always be
achieved since the degree of any vertex is at most $n+1$ and if the second part
is violated at an edge, then we can resolve it by subdividing that edge
sufficiently many times.

Fix a sufficiently small real
number~$L$ such that $nL$ is less than the length of any edge of~$\Gamma$ and we
will construct most of the embedding inside the $n$-dimensional cube~$[0,L]^n$.
We begin by defining $\iota\colon \Gamma \rightarrow \RR^n$ on the finite
vertices of~$\Gamma$ by sending them to points inside this $n$-dimensional cube,
and we will later assume that these points are generic. 

To extend $\iota$ to a small neighborhood of each vertex, we let
$e_1,\dots,e_n$ denote the basis elements $(1,0,\dots,0)$, $(0,1,\dots,0)$,
\dots, $(0,0,\dots,1)$ of $\mathbb{Z}^n$ and set $e_0 = (-1, \ldots, -1)$.
In a neighborhood of a vertex~$v$, we send a small interval along each edge
incident to~$v$ to an interval in the direction~$e_i$ if $i$ is the label on the
edge-vertex pair. In order to complete the local model, we add infinite edges
at~$v$ until its degree is $n+1$ and then send these infinite edges to rays in
the direction $e_i$ for the indices $i$ which were not among the labels
around~$v$. We also send the infinite edges of~$\Gamma$ to rays in the already
determined directions.

We now extend the map~$\iota$ to the finite edges of~$\Gamma$ as follows.
For each edge~$E$ between vertices $u$ and~$w$ in~$\Gamma$, we already defined
$\iota$ on neighborhoods of each endpoint, thus giving the initial
tangent directions.
We adopt the convention that the index~$i$ for the vector~$e_i$ is taken
modulo $n+1$, so $e_{n+1} = e_0$, and so on.
Thus, we can assume that the tangent directions at~$u$ and~$w$ are
$e_i$ and~$e_{i+1}$ respectively.
Since $e_{i}, \ldots, e_{i+n-1}$ form a basis for $\RR^n$, we can
uniquely write:
\begin{equation*}
\iota(w) - \iota(u) = \alpha_0 e_i + \alpha_1 e_{i+1} + \cdots + \alpha_{n-1}
e_{i + n-1}
\end{equation*}
We set $m = \lvert \alpha_0\rvert + \cdots + \lvert \alpha_{n-1} \rvert$, and
let $\ell$ be the length of the edge~$E$.
Recall that $\iota(w)$ and~$\iota(u)$ are inside a box of side
length~$L$, so $m \leq nL < \ell$, by our choice of~$L$.

We now let $\alpha_0'$, $\alpha_0''$, $\alpha_1'$, and $\alpha_1''$ be the
unique numbers such that for $i = 0, 1$, we have $\alpha_i' > 0$, $\alpha_i'' <
0$, $\alpha_i' + \alpha''_i = \alpha_i$, and
\begin{equation*}
\lvert \alpha_i'
\rvert + \lvert \alpha_i'' \rvert - \lvert \alpha_i \rvert = (\ell - m)/2.
\end{equation*}
Then, we can map the edge~$E$ to connect $\iota(u)$ and~$\iota(w)$ in a
piecewise linear fashion, with all segments parallel to one of $e_i, \ldots,
e_{i+n-1}$. In particular, we define $\iota(E)$ to linearly interpolate between
the following points:
\begin{align*}
\iota(u)& \\
\iota(u) & + \alpha_0' e_i \\
\iota(u) & + \alpha_0' e_i + \alpha_1' e_{i+1} \\
\iota(u) & + \alpha_0' e_i + \alpha_1' e_{i+1} + \alpha_0'' e_i \\
\iota(u) & + \alpha_0' e_i + \alpha_1' e_{i+1} + \alpha_0'' e_i
  + \alpha_{n-1} e_{i+n - 1} \\
\iota(u) & + \alpha_0' e_i + \alpha_1' e_{i+1} + \alpha_0'' e_i
  + \alpha_{n-1} e_{i+n - 1} + \alpha_{n-2} e_{i+n-2} \\
& \vdots \\
\iota(u) & + \alpha_0' e_i + \alpha_1' e_{i+1} + \alpha_0'' e_i
  + \alpha_{n-1} e_{i+n-1} + \alpha_{n-2} e_{i+n-2} + \cdots
  + \alpha_2 e_{i+2} \\
& \qquad = \iota(w) - \alpha_1'' e_{i+1} \\
\iota(w)& 
\end{align*}
The constants $\alpha_i'$ and $\alpha_i''$ are chosen such that this path has
total length~$\ell$, which is the desired edge length, and such that the tangent
directions from $\iota(u)$ and $\iota(w)$ are $e_i$ and~$e_{i+1}$, respectively.
In order to complete the local model at each vertex other than the endpoints
$\iota(u)$ and~$\iota(v)$, we need to add an unbounded ray in the appropriate
direction, corresponding to a subdivision and modification of~$\Gamma$, which we
will call $\Gamma'$.

To show that $\iota$ defines an immersion for $n=2$, we need to show that for
generic choices of images for the vertices of~$\Gamma$, the self-intersections
of~$\iota$ will only occur at edges. If $w$ is a vertex of $\Gamma$, then a
small perturbation of $\iota(w)$ will move $\iota(w)$ off of any other point of
$\iota(\Gamma')$, even if that point is in an edge of $\Gamma$ containing $w$.
On the other hand, if $v$ is a vertex of~$\Gamma'$ formed by the subdivision
of~$\Gamma$, then the key observation is that both coordinates of $\iota(v)$
will be affected by at least one of the endpoints of the original edge~$e$ on
which $v$ occurs. Moreover, even if $\iota(v)$ is contained in an edge
of~$\Gamma$ sharing an endpoint with~$e$, then the two edges start in different
directions from their endpoints, so, in $\Gamma'$, the vertex $v$ is not
adjacent to the common endpoint, so we can perturb $\iota(v)$ using the other
endpoint of~$e$. Since $\Gamma$ has no parallel edges, this means that for any
intersection of $\iota(v)$ with another edge, a small perturbation of one of the
vertices of $\Gamma$ will push $\iota(v)$ off of that edge without affecting the
edge. By keeping the perturbation sufficiently small, we will not introduce any new
self-intersections, and thus we can obtain the desired immersion.

For $n\geq 3$, we will show that for generic choices for an embedding of the vertices, $\iota$ is injective.
Here, assume that we have an intersection between two different embedded edges.
By our initial subdivisions, these edges share at most one endpoint in common.
Moreover, by our choice of the directions for the edge near the endpoints, even
if the edges share an endpoint~$v$, the intersection will not be in the first two
segments next to~$v$. Thus, at least two of the coordinates of this
point of intersection will depend on the opposite endpoint, so there is at least
a $2$-dimensional space of perturbations that will move the segment. On the
other hand, the perturbations parallel to the other segment will still have a
self-intersection, but this will be at most a $1$-dimensional subspace. Thus, we
can find some perturbation which eliminates the intersection.

Finally, we need to prove the quadratic bound on the number of crossings in the
case of immersions in $\mathbb{R}^2$. The subdivisions to avoid loops or parallel edges can be
done be replacing each edge by $2$ or~$3$ edges respectively. Then, our
immersion further subdivides each edge into $4$ segments and introduces $3$
infinite rays. Thus, each edge of~$\Gamma$ results in a bounded number of
segments in the immersion. Moreover, these segments are each parallel to one of
the vectors $e_0$, $e_1$, and~$e_2$, so the intersection multiplicity of any two
is at most one. Thus, the total number of crossings of~$\iota$ is $O(e^2)$, as
desired.
\end{proof}

\begin{rmk}\label{rmk:immersion-embedding-extras}
We note that the embeddings and immersions constructed in the proof of
Theorem~\ref{thm:immersion-embedding} have the following additional properties,
which will be relevant for the applications to realizability of curves
in Theorem~\ref{thm:faithful-immersion} and in~\cite{log-deformation}. First, for every edge of~$\Gamma$, the directions of
the embeddings of the edges in the subdivision form a basis for $\ZZ^n$. Second,
if all of the lengths of~$\Gamma$ are rational, then all the vertices of the
tropical curve can also be chosen to be rational. Third, vertices contained in
three or more bounded edges are only adjacent to vertices contained in at most
two bounded edges, because every edge of the original graph is subdivided.
\end{rmk}

\begin{rmk}\label{rmk:generic-projection}
In algebraic geometry and other areas, immersions and embeddings may
be constructed by starting with an embedding in a high-dimensional space and
projecting (for example, Prop.~IV.3.5 and Thm.~IV.3.10 in~\cite{hartshorne}).
However, the key to such arguments is showing that generic projections preserve
embeddings or create immersions. However,
in tropical geometry, projections which preserve smoothness,
even at a single point are relatively rare and thus can not be considered to be
the generic case.
\end{rmk}

\section{Crossing numbers and gonality}\label{sec:gonality}

By Theorem~\ref{thm:immersion-embedding}, any trivalent tropical curve has a planar
immersion. We can define the \defi{tropical crossing number} of the curve to
be the minimal number of nodes, counted with multiplicity, of any planar
immersion. Thus, the tropical curve has a smooth planar embedding if and
only if its crossing number is zero, and Theorem~\ref{thm:immersion-bound}
establishes a quadratic upper bound on the crossing number. In this section, we
establish a lower bound on the crossing number of an abstract tropical curve in
terms of its genus and divisorial gonality. We use this to prove
Theorem~\ref{thm:large-crossing-number} in the form of
Corollary~\ref{cor:chain-loops}.

A tool we will repeatedly use in this section and the next is the dual
subdivision of a plane curve. We gather some basic facts about the dual
subdivision which will come up in many of the proofs. Since any nodal tropical
curve~$\Gamma$ is balanced, \cite[Thm.~3.3]{rgst} states that it is the
non-differentiable locus of a concave piecewise-linear function, whose slopes
are integral and such that the difference between the slope vectors on either
side of an edge of $\Gamma$ has relatively prime entries. The dual
subdivision~$\Delta$ is the projection of the lower convex hull of the slopes of
the piecewise linear function. The slopes on either side of an edge of $\Gamma$
will be joined by an edge of this subdivision and since the difference vector
has relatively prime entries, the only integral points contained in the edge are
its endpoints. See~\cite[Sec.~1.3]{maclagan-sturmfels} for details on the dual
subdivision.

For us, the relevance of the above construction will be the duality between the
curve and the subdivision, in which the vertices and edges of the curve
correspond to the polygons and edges of the subdivision respectively. Likewise,
the bounded and unbounded regions of the complement of the curve correspond to
the vertices in the interior and on the boundary of~$\Delta$. An example of a
tropical curve and its dual subdivision are shown in
Figure~\ref{fig:subdivision}.

\begin{figure}
\includegraphics{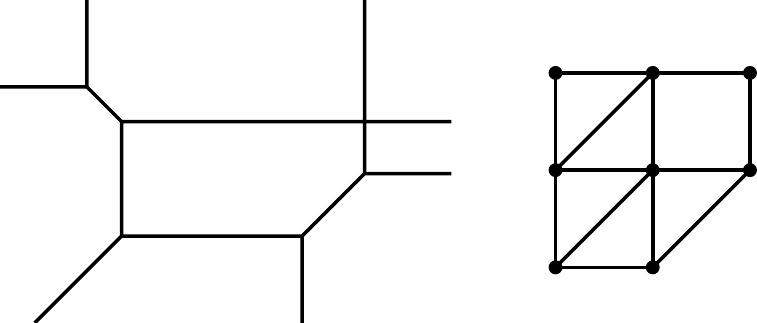}
\caption{A plane tropical curve on the left and the corresponding dual
subdivision on the right. The triangles of the subdivision correspond to the
trivalent vertices of the curve and the square represents the curve's unique
node.} \label{fig:subdivision}
\end{figure}

\begin{defn}
The \defi{genus}~$g$ of an abstract tropical curve is the rank of the first
homology of its underlying graph, i.e.\ $\dim_\QQ H_1(\Gamma, \QQ)$.
\end{defn}

\begin{prop}\label{prop:interior-points}
If $\Gamma$ is a tropical curve of genus~$g$ and $\iota(\Gamma)$ is an immersion
with $n$ nodes, counted with multiplicities, then
\begin{equation*}
i = g + n,
\end{equation*}
where $i$ is the number of integral points in the interior of the dual polygon.
\end{prop}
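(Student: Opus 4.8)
The plan is to argue via the dual subdivision $\Delta$ of the Newton polygon $P$ of $\iota(\Gamma)$, using the duality recalled just before the statement, and to compute $i$ in two independent pieces. Let $V_{\mathrm{int}}$ denote the number of interior vertices of $\Delta$ and let $k$ be the number of distinct nodes of the immersion, so that $n = \mu_1 + \cdots + \mu_k$ is the sum of their multiplicities. Under the duality, bounded regions of the complement of $\iota(\Gamma)$ correspond to interior vertices of $\Delta$; since a smooth plane vertex has valence at most three, the only two-cells of $\Delta$ are the unimodular triangles dual to the trivalent smooth vertices and the parallelograms dual to the nodes, the parallelogram dual to a node of multiplicity $\mu_j$ having Euclidean area $\mu_j$ and primitive sides (its sides being dual to the two weight-one lines through the node).

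The first step is to account for \emph{every} interior lattice point of $P$. A unimodular triangle contains no interior lattice point, and, by the primitivity of the difference of the adjacent slope vectors recalled above, no edge of $\Delta$ contains a lattice point other than its two endpoints. Hence the only interior lattice points of $P$ that are \emph{not} vertices of $\Delta$ are those in the interiors of the node parallelograms. Applying Pick's theorem to a parallelogram of area $\mu_j$ with four boundary lattice points shows it has exactly $\mu_j - 1$ interior lattice points, so
\[
 i = V_{\mathrm{int}} + \sum_{j=1}^{k}(\mu_j - 1) = V_{\mathrm{int}} + (n - k).
\]

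The second step is to identify $V_{\mathrm{int}}$ with $g + k$. Here I would apply Euler's formula to the planar graph $\iota(\Gamma)$, viewing each node as a $4$-valent vertex and compactifying the unbounded rays to a single point at infinity. Because the boundary vertices of $\Delta$ correspond to the unbounded complementary regions, which are in bijection with the rays, the contributions of the rays cancel and one is left with $V_{\mathrm{int}} = b_1(\iota(\Gamma))$, the first Betti number of the embedded graph. Resolving a single node replaces one $4$-valent vertex by two disjoint strands, decreasing $E - V$ by exactly one; carrying out all $k$ resolutions turns $\iota(\Gamma)$ into a tropical modification of $\Gamma$. Comparing $b_1 = E - V + c$ before and after, and using that both $\iota(\Gamma)$ and $\Gamma$ are connected (so $c = 1$ at both ends), gives $b_1(\Gamma) = b_1(\iota(\Gamma)) - k$; since tropical modification only attaches infinite pendant edges and leaves $b_1$ unchanged, this is $V_{\mathrm{int}} = g + k$. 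Combining the two steps, the $k$'s cancel and $i = (g+k) + (n-k) = g + n$.

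I expect the bookkeeping in the second step to be the main obstacle. One must keep careful track of how the unbounded rays and the auxiliary point at infinity enter Euler's formula, and one must use the connectedness of $\Gamma$ to control the relation $b_1 = E - V + c$: a priori an individual node resolution could disconnect the graph, which would decouple $E - V$ from $b_1$, so the clean argument compares $E - V + c$ only at the connected endpoints $\iota(\Gamma)$ and $\Gamma$ rather than step by step. The first step is essentially local and routine once the area-$\mu_j$ parallelogram picture of a node is in place, the only delicate point being that the hidden interior lattice points are \emph{exactly} those inside the node cells, which is precisely where the primitivity of the edge-difference vectors is used.
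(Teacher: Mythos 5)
Your proposal is correct and follows essentially the same route as the paper's proof: both decompose the interior lattice points of the Newton polygon into interior vertices of the dual subdivision (which count the bounded complementary regions, i.e.\ the genus $g'$ of the unresolved image) plus the points interior to the node parallelograms (which Pick's theorem counts as $\mu_j - 1$ each), and both recover $g$ from $g'$ by noting that resolving a node adds one vertex without changing the edge count. Your extra care about Euler's formula, the point at infinity, and connectedness at the endpoints just makes explicit what the paper leaves implicit; the substance is identical.
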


We will use the following well-known result about the number of lattice points
in an integral polygon.

\begin{prop}[Pick's Theorem]\label{prop:pick}
Let $P$ be a polygon with integral vertices. If $P$ has area~$A$ and $i$ lattice
points, of which $b$ are on the boundary of the polygon, then we have the
relation $A = i - b/2 - 1$.
\end{prop}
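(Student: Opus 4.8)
The plan is to reduce the statement to the single lemma that an \emph{empty} lattice triangle---one whose only lattice points are its three vertices---has area exactly $1/2$, and then to assemble the general polygon from such triangles using Euler's formula. Since Pick's theorem is classical, I will emphasize the logical skeleton rather than the routine verifications.

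First I would triangulate $P$ into lattice triangles using \emph{every} lattice point of $P$ as a vertex, refining until no triangle contains a lattice point other than its three vertices; this is always possible, since any lattice point lying in a triangle but not already one of its vertices can be used to subdivide. Call the resulting triangles $T_1,\dots,T_N$. By the key lemma each $T_k$ has area $1/2$, so $A = N/2$. Viewing the triangulation as a planar graph, its vertex set is exactly the $i$ lattice points, its faces are the $N$ triangles together with one outer face, and its boundary consists of $b$ edges, since the boundary lattice polygon is cut by its $b$ boundary lattice points into $b$ segments. Counting incidences, $3N = 2E - b$, where $E$ is the number of edges, because each interior edge borders two triangles while each of the $b$ boundary edges borders one triangle and the outer face. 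Substituting $E = (3N + b)/2$ and $F = N + 1$ into Euler's relation $V - E + F = 2$ with $V = i$ yields, after simplification, $N/2 = i - b/2 - 1$, which is the desired identity $A = i - b/2 - 1$.

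The main obstacle is the key lemma, that an empty lattice triangle has area $1/2$. I would prove it by translating one vertex to the origin, so the triangle has vertices $0, u, w$ with $u, w \in \ZZ^2$, and considering the half-open parallelogram $\Pi = \{\,su + tw : 0 \le s, t < 1\,\}$. This $\Pi$ is a fundamental domain for the sublattice $L = \ZZ u + \ZZ w \subseteq \ZZ^2$, so it contains exactly $[\ZZ^2 : L] = \lvert \det(u,w) \rvert = 2A$ points of $\ZZ^2$. If $2A > 1$ there is a nonzero $p = su + tw \in \ZZ^2 \cap \Pi$; then $p$ (when $s + t \le 1$) or $u + w - p$ (when $s + t > 1$) is a lattice point lying in the closed triangle and distinct from all three vertices, contradicting emptiness. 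Since $\lvert \det(u,w) \rvert$ is a positive integer, this forces $2A = 1$.

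An alternative to the Euler-formula bookkeeping is to verify directly that the quantity $A - i + b/2 + 1$ vanishes on empty triangles and is additive when two lattice polygons are glued along a shared edge carrying $k$ lattice points: the $k-2$ interior points of that edge become interior to the union, and the short count $i(P) = i(P_1) + i(P_2) - k$, $b(P) = b(P_1) + b(P_2) - 2k + 2$ shows the invariant adds. Induction over a triangulation then finishes. Either route isolates the same crux, the area of an empty triangle, so I would present whichever bookkeeping is shorter and treat the fundamental-domain argument as the heart of the proof.
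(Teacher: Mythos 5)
Your proposal is correct, but there is nothing in the paper to compare it against: the paper states Pick's theorem as a well-known classical fact (``We will use the following well-known result\ldots'') and gives no proof, using it only as a tool in Propositions~\ref{prop:interior-points} and~\ref{prop:gonality}. Your argument is the standard proof and it is sound. You use the paper's convention correctly ($i$ counts \emph{all} lattice points, not just interior ones, which is why the identity reads $A = i - b/2 - 1$ rather than the more common $A = I + b/2 - 1$). The bookkeeping checks out: the incidence count $3N = 2E - b$ is right because interior edges bound two triangles and the $b$ boundary edges bound one, and substituting $E = (3N+b)/2$, $F = N+1$, $V = i$ into $V - E + F = 2$ does yield $N/2 = i - b/2 - 1$. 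The fundamental-domain argument for the key lemma is also complete: the half-open parallelogram contains exactly $\lvert\det(u,w)\rvert = 2A$ lattice points, and your case split on $s+t$ correctly produces a lattice point of the closed triangle distinct from all three vertices (the strict inequalities $s,t<1$ rule out coincidence with $0$, $u$, or $w$), so emptiness forces $2A=1$. Two small points you could tighten in a full write-up: (i) the refinement step should distinguish a lattice point interior to a triangle (split into three) from one interior to an edge (split the one or two incident triangles into two each), with termination because the lattice points of $P$ are finite and each step strictly increases $N$; and (ii) you should say the word ``simple'' somewhere, since Euler's formula and the initial triangulation require $P$ to be a simple polygon --- which suffices for the paper, as it only applies the result to convex polygons. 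Your alternative additivity route is also valid (the invariant $A - i + b/2 + 1$ vanishes on empty triangles and your gluing arithmetic is correct), and either version would serve as a complete proof of the proposition.
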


\begin{proof}[Proof of Prop.~\ref{prop:interior-points}]
We first introduce some additional notation. We write $g'$ for the genus
of~$\iota(\Gamma)$, without resolving any nodes, and $n'$ for the number of such
nodes, without any multiplicities. Since the procedure for resolving a node
adds a vertex, but does not change the number of edges, we have that $g = g' -
n'$. Therefore, it will suffice to prove the relation
\begin{equation}\label{eq:genus-modified}
i = g' + n - n'.
\end{equation}

The dual subdivision~$\Delta$ will have a triangle corresponding to each
trivalent vertex and a parallelogram corresponding to each node. We now count
how the interior lattice points of~$\Delta$ fall within the subdivision. Because
of our local smooth model, the only lattice points in a triangle will be its
vertices.
Moreover, as noted above, there are no
lattice points on the interior of any edge of the subdivision. Thus, an internal
lattice point of~$\Delta$ is either a vertex of the subdivision or in the
interior of a parallelogram. Each vertex of the former category defines a
bounded region of the complement of $\iota(\Gamma)$ and thus there are $g'$ such
vertices.
Therefore, we will have verified~(\ref{eq:genus-modified}) if we
can show that a node of multiplicity $m$ is dual to a parallelogram containing
$m-1$ interior lattice points.

Let $P$ be such a parallelogram. A simple computation shows that the area of~$P$
is equal to the multiplicity~$m$ of the node as in~(\ref{eq:node-multiplicity}). As noted
above, the only lattice points on the boundary of~$P$ are the $4$ vertices.
Therefore, Pick's formula (Prop.~\ref{prop:pick}) tells us that $P$ contains
$m+3$ lattice points and thus $m-1$ interior lattice points.
\end{proof}

Our method for proving the asymptotic lower bound on the crossing number from
Theorem~\ref{thm:large-crossing-number} is the divisorial gonality of graph as
in~\cite{baker}. Specifically a graph~$\Gamma$ is defined to have
\defi{divisorial gonality}~$d$ if $d$ is the least integer such that there
exists a divisor on~$\Gamma$ of degree~$d$ and rank~$1$. We refer
to~\cite[Sec.~1]{gathmann-kerber} or~\cite[Def.~7.1]{mikhalkin-zharkov08} for
the definitions of the rank of a divisor and
to~\cite[Sec.~3.6]{maclagan-sturmfels} for the definition of stable
intersection. If $D$ is a divisor on~$\Gamma$, meaning a formal sum of points,
then we define $\iota_*(D)$ to be the formal sum of points in~$\RR^2$ by
taking the images of the points of~$D$.
This construction relates the divisor theory of the abstract tropical curve with
stable intersections of the immersion by the following lemma.

\begin{lem}\label{lem:stable}
Let $a$ be a real number and let $L$ be the line in $\RR^2$ defined by $x = a$.
If $\phi \colon \RR^2 \rightarrow \RR$ is the piecewise
linear function $\phi(x) = \max\{x - a, 0\}$, then $\phi \circ \iota$ is a
piecewise linear function with integer slopes, so that it has a divisor
$\operatorname{div}(\phi \circ \iota)$, such that
$\iota_*(\operatorname{div}(\phi\circ\iota))$ equals the stable intersection of
$L$ with $\iota(\Gamma)$.
\end{lem}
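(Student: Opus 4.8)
The plan is to unpack the definitions on both sides of the claimed equality and verify that the divisor of $\phi \circ \iota$, pushed forward by $\iota$, agrees with the stable intersection of $L$ and $\iota(\Gamma)$ point by point. First I would observe that $\phi(x) = \max\{x-a, 0\}$ is piecewise linear with integer slopes ($0$ and $1$), and since each edge of $\iota(\Gamma)$ is parallel to an integral vector, the restriction $\phi \circ \iota$ along each edge is piecewise linear with integer slopes; its only possible breakpoints along an edge occur where that edge crosses the line $L$. Thus $\phi \circ \iota$ is a well-defined piecewise linear function on $\Gamma$ with integer slopes, and $\operatorname{div}(\phi\circ\iota)$ is the sum over all points $p \in \Gamma$ of the quantity obtained by summing the outgoing slopes of $\phi\circ\iota$ along all edge-directions emanating from $p$.

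Next I would compute $\operatorname{div}(\phi\circ\iota)$ explicitly. The divisor of a piecewise linear function on a metric graph is supported at points where the sum of outgoing slopes is nonzero, with coefficient equal to that slope sum. Because $\phi$ is linear (slope $1$) on $\{x > a\}$, linear (slope $0$) on $\{x < a\}$, and has its bend exactly at $x = a$, the outgoing-slope sum of $\phi \circ \iota$ at a point $p$ measures the net balance of how the edges at $p$ transition across the line $x = a$. At a point $p$ whose image does not lie on $L$, the function $\phi \circ \iota$ is locally linear, so the slope sum is zero by the balancing condition of the tropical curve (this is exactly why $\iota(\Gamma)$ being balanced is essential). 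Hence $\operatorname{div}(\phi\circ\iota)$ is supported only at points of $\Gamma$ mapping into $L$, and after pushing forward by $\iota$, only at points of $L \cap \iota(\Gamma)$.

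The crux is then to identify, at each point $q$ of $L \cap \iota(\Gamma)$, the coefficient $\sum_{p : \iota(p) = q}(\text{outgoing slope sum at } p)$ with the local stable-intersection multiplicity of $L$ and $\iota(\Gamma)$ at $q$. The hard part will be the bookkeeping at intersection points that are not transverse crossings in the interior of an edge: namely, when $q$ is a vertex of $\iota(\Gamma)$ lying on $L$, or when an edge of $\iota(\Gamma)$ is contained in $L$ (the non-generic cases). I would argue that since $\phi$ has slope $1$ to the right of $L$ and slope $0$ to the left, the outgoing slope of $\phi\circ\iota$ along an edge in primitive direction $v = (v_1, v_2)$ is $\max\{v_1, 0\}$ on the side where $x$ is increasing through $a$; summing these contributions over the edges at $p$ and using balancing, the coefficient at $q$ equals $\sum_{v \text{ outgoing}, v_1 > 0} v_1$, which is precisely the weighted count of lattice length that $\iota(\Gamma)$ crosses the vertical line $L$ from left to right, matching the definition of stable intersection multiplicity $\sum_{v} |v_1|$ reduced by balancing. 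For the degenerate cases I would invoke the standard fact that the stable intersection is the limit of transverse intersections under a small translation of $L$, and that $\operatorname{div}(\phi\circ\iota)$ is invariant under such translation since it only depends on the integer-slope structure; comparing the two under a generic perturbation reduces everything to the transverse case, completing the identification.
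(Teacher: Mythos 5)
Your proposal is correct and takes essentially the same route as the paper: both sides are computed by perturbing $L$ slightly to the right, so that each segment through an intersection point extending rightward in primitive direction $v=(v_1,v_2)$ contributes the lattice index $v_1$ to the stable intersection, and this is matched against the outgoing slopes of $\phi\circ\iota$, which are $v_1$ on right-extending segments and $0$ on the others. Your explicit appeal to the balancing condition to show $\operatorname{div}(\phi\circ\iota)$ vanishes at points mapping off of $L$ is a detail the paper leaves implicit, but otherwise the arguments coincide.
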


\begin{proof}
By~\cite[Def.~3.6.5]{maclagan-sturmfels}, the stable intersection consists of
those points contained both in $L$ and in a segment of $\iota(\Gamma)$ which is
not parallel to~$L$. Let $p$ be such a point and we compute the multiplicity of
the stable intersection at~$p$ using a small perturbation of~$L$ to the right so
that the multiplicity is a summation over segments of $\iota(\Gamma)$ containing
both $p$ and also extending to the right of~$L$. Fix one such segment and let $v
= (v_1, v_2)$ be the minimal integral vector parallel to it. Then, the
contribution of this segment to the multiplicity is the index of the lattice
generated by $(0,1)$ and $v$ in $\ZZ^2$ by \cite[Def.~3.6.5]{maclagan-sturmfels}. The former lattice can also be
generated by $(0,1)$ and $(e_1, 0)$, so the index is $e_1$.

On the other hand, $\phi \circ \iota$ is non-zero to the right of~$L$ and so its
divisor can also be computed by taking the slopes along the segments which
extend to the right of~$L$. If $e$ is again the minimal integral vector parallel
to such a segment, then, by our definition of edge lengths, the slope is $\phi(p
+ e) - \phi(p) = e_1$, which is in integer. Thus, the two multiplicity
computations coincide, and so we have the desired equality from the lemma
statement.
\end{proof}

\begin{prop}\label{prop:hyperplane-section}
Let $\iota\colon \Gamma\rightarrow \RR^2$ be an immersion of a tropical curve in
the plane and suppose that $\iota(\Gamma)$ is not just a line. Then the stable
intersection of~$\iota(\Gamma)$ with a straight line of rational slope defines a
divisor with rank at least~$1$.
\end{prop}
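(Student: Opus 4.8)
The plan is to fix the given rational slope, bring the line to vertical position by a change of coordinates, and then recognize the stable-intersection divisor as a fiber of the harmonic coordinate function, whose linear system turns out to be base-point free. First I would apply an element of $\GL_2(\ZZ)$ to $\RR^2$ carrying the given line to a vertical one; since this preserves the tropical structure, the notion of immersion, the integral edge lengths, and all stable-intersection multiplicities, there is no loss in assuming the line is $L_a\colon x = a$. By Lemma~\ref{lem:stable}, for $\phi_a(x) = \max\{x-a,0\}$ the stable intersection of $L_a$ with $\iota(\Gamma)$ is $\iota_*(\operatorname{div}(\phi_a\circ\iota))$; I write $D_a$ for the corresponding effective divisor on $\Gamma$, namely the sum of the points lying over the transversal crossings of $L_a$ with $\iota(\Gamma)$. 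The balancing condition forces $x\circ\iota$ to be harmonic away from the unbounded rays, so the only finite contributions to $\operatorname{div}(\phi_a\circ\iota)$ come from the convex kinks at these crossings, and its finite part is exactly $D_a$.

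The first main step is to show that the $D_a$ for varying $a$ all lie in a single linear equivalence class, so that ``the divisor'' of the statement is well defined. For two values $a$ and~$b$ the function $(\phi_a - \phi_b)\circ\iota$ is bounded and piecewise linear on $\Gamma$: it is constant where the first coordinate is at most $\min\{a,b\}$ and constant where it is at least $\max\{a,b\}$, so in particular it has slope zero along every unbounded ray of the tropical modification. Because the tropical divisor operator is additive in the slopes, $\operatorname{div}((\phi_a-\phi_b)\circ\iota) = \operatorname{div}(\phi_a\circ\iota) - \operatorname{div}(\phi_b\circ\iota)$, the contributions at infinity cancel, and what remains is $D_a - D_b$. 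As this is the divisor of a genuine rational function on $\Gamma$, we conclude $D_a \sim D_b$.

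The second main step is to prove that $D_a$ has rank at least~$1$, that is, that $D_a - q$ is equivalent to an effective divisor for every point $q \in \Gamma$. Here I would choose $a$ to be the first coordinate of $\iota(q)$, so that $\iota(q)$ lies on $L_a$, and pass between values of $a$ using Step~1. If $\iota(\Gamma)$ is not locally vertical at $q$, then $q$ is itself one of the transversal crossings and hence lies in the support of $D_a$, which settles this case at once. The remaining case, in which $q$ lies on an edge whose image is parallel to $L_a$, is the main obstacle: such an edge is contracted by the coordinate $x\circ\iota$ and contributes nothing to the stable intersection, so $q$ need not be captured by $D_a$ for the reason just given.

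To handle the contracted case I would combine the balancing condition with a chip-firing move. Let $V \subseteq \Gamma$ be the maximal connected subgraph containing $q$ on which $x\circ\iota \equiv a$. Since $\Gamma$ is connected and $\iota(\Gamma)$ is not a single line, $V$ is proper and some edge leaving it is non-vertical; by balancing at least one such edge points into the half-plane $\{x > a\}$. Perturbing to the line $L_{a+\epsilon}$ for small $\epsilon > 0$ and invoking Step~1 then produces a representative $D_{a+\epsilon} \sim D_a$ carrying a chip on an edge adjacent to $V$, near $\partial V$. As $V$ is a connected metric graph bearing such a chip, a chip-firing move supported on a neighborhood of $V$ slides the chip to the interior point $q$ while remaining effective, yielding an effective divisor linearly equivalent to $D_a$ that contains $q$. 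Finally, the degree of $D_a$ is positive whenever $\iota(\Gamma)$ is not contained in a vertical line, because a balanced curve not contained in a vertical line must cross some vertical line transversally, so the resulting divisor class is genuinely nonzero of rank at least~$1$.
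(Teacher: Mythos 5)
Your reduction to a vertical line, the use of Lemma~\ref{lem:stable}, the proof that $D_a \sim D_b$ via the bounded function $(\phi_a-\phi_b)\circ\iota$, and the transversal case (taking $a$ to be the first coordinate of $\iota(q)$) all coincide with the paper's argument. The genuine gap is in your final paragraph, the contracted case. Sliding a \emph{single} chip from the point $p'$ adjacent to $V$ to the point $q$ is not a linear equivalence: the new divisor is $D_{a+\epsilon} - p' + q$, which is equivalent to $D_{a+\epsilon}$ if and only if $p' \sim q$, and this fails whenever $p'$ and $q$ are distinct points of a $2$-edge-connected graph. (If $\operatorname{div}(f) = q - p'$, the set where $f$ attains its maximum has boundary contained in the pole set $\{p'\}$; by $2$-edge-connectivity the maximum set must then be $\{p'\}$ itself, but at the pole the two outgoing slopes are $0$ and $-1$, so $f$ is locally constant on one side of $p'$ --- a contradiction.) This failure occurs in exactly the situations at stake: in the embedding of the theta graph in Figure~\ref{fig:theta-unequal} the middle edge is vertical, the graph is $2$-edge-connected, and your move genuinely changes the divisor class. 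Nor does any firing function ``supported on a neighborhood of $V$'' exist: writing $u$ for the endpoint of $V$ nearest $p'$, a function with divisor $q - p'$ takes the three pairwise distinct values $c$, $c - \epsilon$, $c - \epsilon - d(u,q)$ on the pieces of the neighborhood beyond $p'$, beyond the other edge at $u$, and beyond $q$, so it cannot vanish at all four points where the neighborhood's boundary meets the edges adjacent to $V$.

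What repairs the argument --- and is how the paper proceeds --- is to use \emph{two} chips moving in tandem, one at each end of the vertical segment, and to drop the $\epsilon$-perturbation altogether. The smooth local model at each finite endpoint of $V$ forces the two non-vertical directions there to be $(1,k)$ and $(-1,1-k)$, so exactly one edge at each endpoint points into $\{x > a\}$; consequently the stable intersection $D_a$ with the line $x = a$ itself already contains \emph{both} endpoints $u$ and $w$ of $V$. Now take $g = -\min\{d(\cdot,u),\, d(\cdot,w),\, t\}$ on the segment $V$ and $g \equiv 0$ on the rest of $\Gamma$: this is globally well defined precisely because $g(u) = g(w) = 0$, its only poles are at $u$ and $w$, and with $t = \min\{d(u,q), d(w,q)\}$ the divisor $D_a + \operatorname{div}(g)$ is effective and contains $q$. (When $V$ is an unbounded vertical ray it has a single finite endpoint, which lies in $D_a$, and the analogous function supported on the ray slides that one chip out to $q$; a single-chip move is legitimate there, and this is where the hypothesis that $\iota(\Gamma)$ is not a line enters.) The essential point your proposal misses is that a lone chip cannot be moved off its position when cycles run through the contracted edge; the two-chip move, with firing function supported on the segment itself, is what restores global consistency.
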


\begin{proof}
We first change coordinates so that the line~$L$ is parallel to the $y$-axis,
say defined by $x = a$. Let $D$ be the divisor formed by the stable intersection
of~$\Gamma$ with~$L$. Then we need to show that for any point~$p \in \Gamma$,
there exists an effective divisor linearly equivalent to~$D$ which contains~$p$.
We first let $L'$ be the vertical line containing $\iota(p)$, and say that $L'$
is defined by $x = a'$.
Let $D'$ the stable intersection of~$L'$ with~$\iota(\Gamma)$. We define $\phi\colon
\RR^2 \rightarrow \RR$ to be the piecewise linear function
\begin{equation*}
\max\{x-a', 0\} - \max\{x-a, 0\}.
\end{equation*}
Then, using Lemma~\ref{lem:stable} and the linearity the $\operatorname{div}$ function, $D' = D + \operatorname{div}(\phi\circ\iota)$, so $D'$ is linearly
equivalent to~$D$. Moreover, if $p$ is an isolated point of $L' \cap
\iota(\Gamma)$, then $p$ is in $D'$ as required.

Otherwise, $p$ is contained in a positive-length interval of $L' \cap
\iota(\Gamma)$. If this interval is bounded, then, by the local model of a
smooth curve, both its endpoints will be
contained in the stable intersection $D'$.
These endpoints can be moved together the same distance so
that one of them contains~$p$.
If the interval is unbounded, then by our assumption,
it must still be bounded in one direction, and $D'$ will contain that endpoint.
We can then move the point of $D'$ along the unbounded edge until it contains $p$.
\end{proof}

\begin{prop}\label{prop:gonality}
If $\Gamma$ is an abstract tropical curve with divisorial gonality~$d > 2$ and genus~$g$,
then the tropical crossing number of~$\Gamma$ is at least $\frac{3}{8}(d-2)^2
-g + \frac{1}{2}$.
\end{prop}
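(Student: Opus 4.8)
The plan is to pass to the dual subdivision of a crossing-number-minimizing immersion and combine the lattice-point count of Proposition~\ref{prop:interior-points} with a lower bound on the lattice width of the Newton polygon coming from Proposition~\ref{prop:hyperplane-section}. Concretely, let $\iota\colon\Gamma\to\RR^2$ be an immersion realizing the tropical crossing number, with $n$ nodes counted with multiplicity, and let $\Delta$ be its dual polygon. Since $d>2$ the image $\iota(\Gamma)$ is not a single line, so Proposition~\ref{prop:hyperplane-section} is available. Proposition~\ref{prop:interior-points} gives $i = g + n$, where $i$ is the number of interior lattice points of~$\Delta$, so after writing $n = i - g$ it suffices to prove the lower bound $i \geq \frac{3}{8}(d-2)^2 + \frac{1}{2}$.

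First I would bound the lattice width of~$\Delta$ below by the gonality. For any primitive direction~$u$, apply Proposition~\ref{prop:hyperplane-section} to a line~$L$ of rational slope perpendicular to~$u$: the stable intersection $L\cdot\iota(\Gamma)$ is a divisor of rank at least~$1$, hence of degree at least~$d$, because $d$ is by definition the least degree of a divisor of positive rank. On the other hand, sliding~$L$ off to infinity (using Lemma~\ref{lem:stable} to see that the degree is constant within the linear equivalence class) identifies this degree with the total multiplicity with which the unbounded rays of $\iota(\Gamma)$ cross~$L$, which by the duality between the rays of the curve and the edges of~$\Delta$ is exactly the lattice width of~$\Delta$ in the direction~$u$. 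As $u$ ranges over all primitive directions, this shows that $\Delta$ has lattice width at least~$d$ in every direction.

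It then remains to prove the purely combinatorial statement that a convex lattice polygon whose lattice width is at least~$d$ in every direction contains at least $\frac{3}{8}(d-2)^2 + \frac{1}{2}$ interior lattice points, and this is where I expect the main difficulty to lie. Degenerate examples, such as long thin lattice segments, which have large width in all but one direction, show that the hypothesis on \emph{all} directions is essential and that the estimate cannot be reduced to finitely many slope computations. I would prove the statement by slicing~$\Delta$ along a direction of minimal width: writing $w(y)$ for the length of the slice at integer height~$y$, each interior row contributes at least $w(y)-1$ interior points, the function $w$ is concave, and the minimality of the width in this direction bounds the slice lengths below in terms of~$d$, in the spirit of Pál's and Steinhagen's theorems on convex bodies of prescribed minimal width. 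Summing over the interior rows and cleaning up with Pick's formula (Proposition~\ref{prop:pick}) should yield a bound of the stated shape; the shift by~$2$ reflects the loss of one lattice layer at the top and bottom of the slicing, and I expect the constant $\frac{3}{8}$ (rather than the optimal $1/\sqrt 3$ suggested by the equilateral extremal body) to be an artifact of this elementary estimate, which is nonetheless sufficient for the inequality as stated.
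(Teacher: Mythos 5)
Your opening reductions are exactly the paper's: Proposition~\ref{prop:interior-points} converts the claim into the bound $i \geq \frac{3}{8}(d-2)^2 + \frac{1}{2}$ on interior lattice points of the dual polygon~$\Delta$, and Proposition~\ref{prop:hyperplane-section}, with the sweeping-to-infinity argument via Lemma~\ref{lem:stable}, shows that every directional lattice width of~$\Delta$ is at least~$d$. The gap is the lattice-geometry statement you isolate in your last paragraph and only sketch. The paper does not prove it by elementary slicing: it invokes two genuine theorems, namely Castryck--Cools~\cite{castryck-cools} (the convex hull $\Delta^{(1)}$ of the interior lattice points of~$\Delta$ has lattice width $w-2$, except when $\Delta$ is a scaled unimodular simplex, a case disposed of by a direct count) and Fejes T\'oth--Makai~\cite{toth-makai} (a planar convex body of lattice width~$w'$ has area at least $\frac{3}{8}w'^2$), and then applies Pick's formula to~$\Delta^{(1)}$, whose lattice points are precisely the interior points of~$\Delta$.

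Two concrete problems with your sketch. First, the step ``the minimality of the width in this direction bounds the slice lengths below in terms of~$d$'' is false as a row-by-row statement: the slices of any convex polygon shrink to length~$0$ near its extreme points in the slicing direction, however large the lattice width is. What is true is an integrated statement --- essentially the area inequality --- and that is exactly the Fejes T\'oth--Makai theorem, whose proof must use the width hypothesis in \emph{all} directions simultaneously (your own long-thin-segment remark shows one direction, or any finite set, is not enough), whereas your slicing only sees the chosen direction; this is where the entire difficulty is concentrated, and no argument is given. Second, your expectation about the constant is backwards: $\frac{3}{8}$ is not a lossy artifact with true value $1/\sqrt{3}$; it is the \emph{sharp} constant for lattice width, attained by the triangle with vertices $(0,0)$, $(1,\tfrac{1}{2})$, $(\tfrac{1}{2},1)$, which has lattice width~$1$ and area exactly~$\frac{3}{8}$ (the equilateral triangle and $1/\sqrt{3}$ belong to P\'al's problem for \emph{Euclidean} width, a different invariant, and dilates of the above triangle are even lattice polygons realizing the constant asymptotically). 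Since the proposition's leading coefficient is exactly this sharp~$\frac{3}{8}$, you have no room to lose a constant factor: an elementary estimate of the kind you describe would have to be exactly optimal, i.e.\ you would in effect have to reprove the Fejes T\'oth--Makai inequality, and in addition control the lower-order terms --- the paper's detour through $\Delta^{(1)}$ exists precisely because Pick's formula applied to~$\Delta$ itself loses an uncontrolled boundary term $b/2$, and a crude Riemann-sum comparison between your row sums and the area loses a further linear term. So the proposal, as written, is missing its main ingredient; it becomes a correct proof once the two cited results are imported, at which point it coincides with the paper's argument.
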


Our proof uses the same technique as in~\cite[Thm.~3.3]{smith}, which bounds the
gonality of a curve in a smooth toric variety.

\begin{proof}
Suppose that $\Gamma$ has divisorial gonality~$d$ and we have a planar
immersion. By~\cite[Thm. 3.3]{rgst}, $\Gamma$ is dual to a subdivision of a
Newton polygon~$\Delta$. Recall that the \defi{lattice width} $w$ of the polygon~$\Delta$ is
the smallest possible value of $\max\{\lambda \cdot \mathbf x \mid \mathbf x \in
\Delta\} - \min\{\lambda
\cdot \mathbf x \mid \mathbf x \in \Delta\}$ as $\lambda \in \ZZ^2$ ranges over
all non-zero integer vectors. Then, applying
Proposition~\ref{prop:hyperplane-section} to the line defined by $\lambda \cdot
\mathbf x = 0$, we see that
the divisorial gonality
of~$\Gamma$ is at most~$w$, i.e.\ $w \geq d$. We let $\Delta^{(1)}$ denote the
convex hull of the interior points of~$\Delta$ and let $w'$ denote the lattice
width of~$\Delta^{(1)}$. Then Theorem~4 from~\cite{castryck-cools} shows that
$w' = w-2$ or $\Delta$ and~$\Delta^{(1)}$ are unimodular simplices scaled by~$w$
and $w-3$ respectively. We first deal with the former case, for which we have $w' \geq d-2$.

Then, Theorem~2 from~\cite{toth-makai} implies that $A$, the area
of~$\Delta^{(1)}$ is at least $3(d-2)^2 / 8$.
By our assumption that $d > 2$, we know that $w' > 0$, so $\Delta^{(1)}$ is not
contained in a line. We thus apply Pick's formula,
Proposition~\ref{prop:pick}, to get the
relation $i = A + b/2 + 1$, where
$i$ is the number of integral points in~$\Delta^{(1)}$, and $b$ is the number of
those integral points on the boundary of~$\Delta^{(1)}$. Thus,
\begin{equation*}
i \geq A + 1 \geq \frac{3(d-2)^2}{8} + 1.
\end{equation*}

We now return to the case when $\Delta^{(1)}$ is a unimodular simplex scaled by
$w-3$. We can directly compute that in this case, with $i$
again the number of integral points in~$\Delta^{(1)}$,
\begin{equation*}
i = \frac{(w-2)(w-1)}{2} \geq \frac{(d-2)(d-1)}{2} =
\frac{(d-2)^2}{2} + \frac{d-2}{2}
\geq \frac{3(d-2)^2}{8} + \frac{1}{2}.
\end{equation*}
Thus, in either case, we can apply 
Proposition~\ref{prop:interior-points} to get that the number
of nodes is $i - g$ and thus at least $\frac{3}{8}(d-2)^2 - g + \frac{1}{2}$,
as claimed.
\end{proof}

We prove Theorem~\ref{thm:large-crossing-number} using the chain of loops with
bridges from~\cite{jensen-payne}, which is similar to the chain of loops
from~\cite{cdpr}, but with bridges added between the loops. In particular, these
graphs are planar. As in~\cite{cdpr} and~\cite{jensen-payne}, we will assume
that the lengths of the edges in the loops are generic, for which it is
sufficient for the vector of length assignments for edges in the loops to avoid
a finite union of rational hyperplanes. The following then shows that this
family is far from having planar embeddings for large~$g$.

\begin{cor}\label{cor:chain-loops}
If $\Gamma$ is the chain of $g \geq 3$~loops with bridges and has generic edge lengths then the
crossing number of~$\Gamma$ is at least $3 g^2 / 32 - 11 g/ 8 + 7/8$. In
particular, the crossing number of this family of graphs is quadratic in the
number of edges.
\end{cor}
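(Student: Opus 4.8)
The plan is to obtain the corollary as an immediate consequence of Proposition~\ref{prop:gonality}, which already converts a lower bound on divisorial gonality into a lower bound on the tropical crossing number. The only new ingredient required is the divisorial gonality of the chain of $g$ loops with bridges under the genericity hypothesis on the edge lengths. For this I would invoke the Brill--Noether generality of these curves: by the results of~\cite{cdpr}, and the bridged variant of~\cite{jensen-payne}, for generic edge lengths the divisorial gonality equals the gonality of a Brill--Noether general curve of genus~$g$, namely $d = \lceil g/2\rceil + 1 = \lfloor (g+3)/2\rfloor$. Since $g \geq 3$ forces $d \geq 3 > 2$, the hypothesis of Proposition~\ref{prop:gonality} is satisfied, and that proposition yields a crossing number of at least $\tfrac{3}{8}(d-2)^2 - g + \tfrac{1}{2}$.

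The main bookkeeping step is then handling the floor in~$d$. Writing $d = \lceil g/2\rceil + 1$, one has $d-2 = (g-2)/2$ when $g$ is even and $d-2 = (g-1)/2$ when $g$ is odd, so in all cases $d - 2 \geq (g-2)/2$, with equality precisely in the even case. Substituting the uniform bound $(d-2)^2 \geq (g-2)^2/4$ into Proposition~\ref{prop:gonality} and expanding $\tfrac{3}{8}\cdot\tfrac{(g-2)^2}{4} - g + \tfrac{1}{2}$ gives $\tfrac{3g^2 - 44g + 28}{32} = \tfrac{3g^2}{32} - \tfrac{11g}{8} + \tfrac{7}{8}$, which is exactly the claimed bound. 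For odd~$g$ the genuine bound coming from $d-2 = (g-1)/2$ is strictly larger, so this single formula is a valid lower bound for every $g \geq 3$.

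For the final assertion that the crossing number is quadratic in the number of edges, I would count edges: the chain of $g$ loops with bridges is trivalent of genus~$g$, so combining $|E| - |V| + 1 = g$ with the trivalence relation $3|V| = 2|E|$ shows its edge count~$e$ is linear in~$g$, i.e.\ $e = \Theta(g)$. Hence $g = \Theta(e)$ and the lower bound $\tfrac{3g^2}{32} - \tfrac{11g}{8} + \tfrac{7}{8}$ is $\Theta(g^2) = \Theta(e^2)$. Together with the upper bound $O(e^2)$ from Theorem~\ref{thm:immersion-bound}, this pins the crossing number at $\Theta(e^2)$ and thereby establishes Theorem~\ref{thm:large-crossing-number}.

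Because essentially all of the analytic content is already packaged into Proposition~\ref{prop:gonality} and into the cited gonality computation, the difficulty lies not in the inequality manipulations but in correctly importing the gonality of the \emph{bridged} chain. I expect the delicate point to be verifying that inserting bridges—which are separating edges contributing nothing to the genus—does not lower the divisorial gonality below the Brill--Noether value established in~\cite{cdpr}; that is, confirming that the result applies to the trivalent bridged model used in~\cite{jensen-payne} and not merely to the bridgeless chain of loops.
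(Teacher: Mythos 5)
Your proof is correct and takes essentially the same route as the paper: it deduces Brill--Noether generality of the bridged chain from~\cite{cdpr} together with the bridge-invariance of divisorial gonality from~\cite{jensen-payne}, obtains gonality $\lceil g/2\rceil + 1 > 2$ for $g \geq 3$, substitutes into Proposition~\ref{prop:gonality} after dropping the ceiling, and uses the edge count $3g-3$ for the quadratic conclusion. The only differences are cosmetic---you derive $e = 3g-3$ from trivalence and the genus formula rather than stating it outright, and you explicitly note the combination with the upper bound of Theorem~\ref{thm:immersion-bound} to get $\Theta(e^2)$.
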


\begin{proof}
By~\cite[Thm.~1.1]{cdpr} and the fact that divisorial gonality does not change
by adding bridges~\cite[Rmk.~1.2]{jensen-payne}, we deduce that $\Gamma$ is Brill-Noether general.
This implies that it has divisorial gonality $\lceil g/ 2 \rceil + 1$. In particular, when $g \geq
3$, the divisorial gonality is greater than $2$. Substituting this into the
expression in Proposition~\ref{prop:gonality} and dropping the ceiling, we see
that $\Gamma$ has crossing number at least $3 g^2 / 32 - 11 g / 8 + 7/8$.

The last sentence follows from the fact that $\Gamma$ has $3g-3$ edges and thus
the crossing number is also quadratic in the number of edges.
\end{proof}

\section{Low genus curves}{\label{sec:classlowgenus}}

In this section, we examine the crossing numbers of tropical curves with
genus at most~$2$. We begin with genus~$0$. 
The underlying graph of a genus~$0$ tropical curve is a tree and in
Proposition~\ref{prop:tree}, we characterize genus~$0$ tropical curves with
crossing number~$0$ in terms of its underlying graph. In genus~$1$, we only
consider graphs consisting of infinite edges attached to the central loop, in
which case Proposition~\ref{prop:sun} gives a sharp bound on the crossing
number. For genus~$2$ curves, we restrict to \defi{stable tropical curves}, by
which we mean that every vertex
has degree equal to~$3$. In particular, stable tropical curves have no infinite
edges.
Proposition~\ref{prop:genus-2} gives the crossing numbers of stable
genus~$2$ tropical curves. In this case, the tropical crossing number depends on the edge length.


The following proposition is also proved as Proposition~8.3 in~\cite{bjms},
where the curves satisfying the hypothesis are called sprawling.

\begin{prop}\label{prop:lollipop}
Let $\Gamma$ be a trivalent abstract tropical curve and let $v$ be a degree~$3$
vertex of~$\Gamma$ such that removing~$v$ disconnects $\Gamma$ into three
components~$A$, $B$, and~$C$. We consider each of these components to be a
tropical curve by including $v$ in each of them.

Suppose that $A$, $B$, and~$C$ each contain at least one trivalent vertex.
Then $\Gamma$ has a planar embedding if and only if $A$, $B$,
and~$C$ each contain a single trivalent vertex, in which case $\Gamma$ looks like
the curve shown in Figure~\ref{fig:windmill}.
\end{prop}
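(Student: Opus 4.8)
The plan is to prove the two implications separately, using the dual subdivision as the main tool for the harder direction and taking the configuration of Figure~\ref{fig:windmill} as the model case.

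For the ``if'' direction I would simply produce the embedding. Normalize so that the three edges at $v$ leave in the primitive directions $e_1$, $e_2$, $-e_1-e_2$; these sum to zero and are pairwise unimodular, so the image of $v$ is a smooth vertex. Each component $X\in\{A,B,C\}$ contains a single trivalent vertex $w_X$, so after the bridge $vw_X$ the two remaining edges of $w_X$ determine the rest of $X$. I would route the short edge $vw_X$ and these two further edges entirely inside the open planar sector cut out by the two edges at $\iota(v)$ \emph{not} pointing toward $X$, choosing the outgoing directions at $w_X$ to again be balanced and unimodular. Since the three components then occupy three pairwise-disjoint sectors and meet only at $\iota(v)$, this is a smooth planar embedding, realizing the windmill of Figure~\ref{fig:windmill}.

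For the ``only if'' direction I would argue by contraposition through the dual subdivision. Given a smooth planar embedding, let $\Delta$ be the dual Newton polygon; it is convex and carries a unimodular triangulation in which $v$ is dual to a triangle $T_v$ whose only lattice points are its vertices. Because $A$, $B$, and $C$ each contain a trivalent vertex, the three edges $e_A,e_B,e_C$ at $v$ are \emph{bounded} bridges, hence dual to interior edges of $\Delta$; thus all three edges of $T_v$ are interior, and across them sit three subcomplexes $R_A,R_B,R_C$, namely the triangles dual to the vertices of $A$, $B$, $C$ respectively. Since removing $v$ disconnects $\Gamma$, no edge of $\Delta$ joins $R_X$ to $R_Y$ for $X\neq Y$, so the triangulation's dual adjacency graph is a star: $\Delta = T_v\cup R_A\cup R_B\cup R_C$, with each flap $R_X$ meeting $T_v$ only along the single edge $e_X^\vee$. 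Counting triangles, the flap $R_X$ is a single triangle precisely when $X$ has exactly one trivalent vertex, so it remains to show that convexity of $\Delta$ forces every flap to be a single triangle.

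This last step is the main obstacle. After a change of coordinates in $\GL_2(\ZZ)$ I may take $T_v=\mathrm{conv}\{(0,0),(1,0),(0,1)\}$, and the balancing $d_A+d_B+d_C=0$ of the edge directions at $v$ then confines the three flaps to the half-planes $\{x\le 0\}$, $\{y\le 0\}$, and $\{x+y\ge 1\}$. I would analyze convexity of $\Delta$ locally at the three vertices $(0,0),(1,0),(0,1)$ of $T_v$, each of which is shared by $T_v$ and two flaps: the interior angle of $\Delta$ there must be at most $180^\circ$ (or exactly $360^\circ$ if the vertex is interior), and combined with the facts that each flap lies in its prescribed half-plane and is unimodularly triangulated, this leaves no room for any flap to reach beyond a single unit triangle without producing a reflex angle at a shared vertex; an enlarged flap, harmless in isolation, is obstructed only by the simultaneous fitting of all three around $T_v$. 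Concluding that each flap is one triangle, and feeding this back through Proposition~\ref{prop:interior-points}, gives that $A$, $B$, and $C$ each have exactly one trivalent vertex, as claimed.
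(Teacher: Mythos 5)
Your ``if'' direction is fine (it is just the windmill embedding of Figure~\ref{fig:windmill}, as in the paper), but your ``only if'' direction has a genuine gap at exactly the step you call ``the main obstacle,'' and the ingredients you propose for closing it do not hold up. First, the claim that balancing at $v$ ``confines the three flaps to the half-planes'' $\{x\le 0\}$, $\{y\le 0\}$, $\{x+y\ge 1\}$ is not a consequence of balancing: balancing determines only the triangle $T_v$ itself, and nothing you say prevents a flap from wrapping around $T_v$. Ruling out such wrap-around is essentially the statement being proved, so as written this step begs the question. Second, the parenthetical ``or exactly $360^\circ$ if the vertex is interior'' is a case you must actually exclude, and excluding it needs an argument you do not give: if a vertex of $T_v$ were interior to $\Delta$, its star would be a cycle of edge-adjacent triangles through $T_v$, and the rest of that cycle would connect two different flaps by shared edges while avoiding $T_v$, contradicting that removing $v$ disconnects $\Gamma$. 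Third, the decisive claim that convexity ``leaves no room for any flap to reach beyond a single unit triangle'' is asserted, not proved. It can in fact be proved, but by a different mechanism than the one you gesture at: normalize $T_v=\mathrm{conv}\{(0,0),(1,0),(0,1)\}$; check that a unimodular triangle over a unit edge has apex angle at most $45^\circ$ and over the diagonal edge at most $90^\circ$, so the three first triangles of the flaps contribute at least $3\cdot 180^\circ-(45^\circ+45^\circ+90^\circ)=360^\circ$ of angle at the vertices of $T_v$; observe that any second triangle of a flap is edge-adjacent to the first (flaps are edge-connected), hence contains a vertex of $T_v$ and contributes strictly positive extra angle there; and conclude, since all three vertices of $T_v$ lie on $\partial\Delta$ by the excluded interior case, that the total angle budget of $540^\circ$ (of which $T_v$ itself uses $180^\circ$) is already exhausted, so no flap can have a second triangle. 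None of these steps appears in your sketch, and simple convexity ``at the three vertices'' without them is not enough, because thin unimodular triangles can contribute arbitrarily small angles at one endpoint of their base.

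Two further remarks. The biconditional ``$R_X$ is a single triangle precisely when $X$ has exactly one trivalent vertex'' is false in one direction: the embedding is of a tropical modification of $\Gamma$, so bend points carrying unbounded rays are also vertices of the embedded curve and contribute triangles to $R_X$; only the implication you actually use (one triangle $\Rightarrow$ one trivalent vertex) is valid. Finally, for contrast, the paper's proof avoids the dual polygon entirely: it parametrizes the smooth local models at the first branch vertices $w_A$, $w_B$, $w_C$ in the three directions from $v$, and uses that each complementary region of the embedding lying between two of these vertices is convex and must be unbounded (since every path in $\Gamma$ between them passes through $v$); this forces the integer parameters in the local models to vanish and forces the remaining edges to be unbounded rays, so no further vertices exist. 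That argument is short and complete; your dual-subdivision route is salvageable along the lines indicated above, but as submitted it is incomplete at its central step.
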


\begin{figure}
\includegraphics{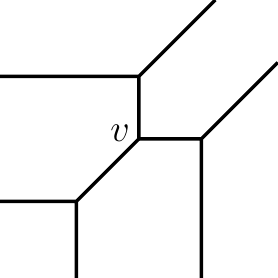}
\caption{A smooth tropical curve whose underlying graph structure is the
windmill graph.}\label{fig:windmill}
\end{figure}

\begin{proof}
Consider a planar embedding of $\Gamma$.  By making a change of coordinates, we
may assume that the outgoing edges from $v$ have directions $(1,0)$, $(0,1)$,
and $(-1,-1)$.  Let $w_A$, $w_B$, and $w_C$ be the first trivalent vertices in these
respective directions, which we've assumed to exist.  We will show that $v$,
$w_A$, $w_B$, and~$w_C$ are the only trivalent vertices.

The local model for a smooth curve implies that the outgoing directions at~$w_A$
are $(-1,0)$, $(-a, -1)$, and $(a+1, 1)$ for some $a \in \ZZ$. Similarly, the
outgoing directions at~$w_B$ are $(0, -1)$, $(-1, -b)$, and $(1, b+1)$ for some
$b \in \ZZ$ and at $w_C$ they are $(1, 1)$, $(c, c-1)$, and $(-c-1, -c)$ for
some $c \in \ZZ$.

Consider the component of the complement $\RR^2 \setminus \Gamma$ which lies
between $w_A$ and $w_B$. This component will be convex because the angle between
any two rays of the standard smooth model is less than $180^\circ$, even after
any change of coordinates.
Furthermore, because the only paths between
$w_A$ and $w_B$ pass through $v$, this component must be unbounded. On the other
hand, we have the edge from $w_A$ in the direction $(a+1, 1)$, so in particular,
with increasing $y$-coordinate, and we have an edge from $w_B$ with slope $b+1$,
so if $b+1 \leq 0$, then these two directions would eventually meet, and thus
they could not form edges of a convex, unbounded region. We conclude that $b
\geq 0$. Applying the same analysis to the region between $w_C$ and~$w_B$, we
get that $b \leq 0$, so $b=0$ and by symmetry $a=c = 0$ as well.

At this point, we know that, in a neighborhood of $v$, $w_A$, $w_B$, and~$w_C$,
the embedding of $\Gamma$ must
look like in Figure~\ref{fig:windmill}. However, the region between $w_A$
and~$w_B$ now has two edges parallel to $(1,1)$ in its boundary, and as we've
seen, this region must be unbounded, so these edges are also unbounded.
By symmetry, each of the vertices $w_A$, $w_B$, and~$w_C$ must have two
unbounded edges from it. Thus, there are no further trivalent vertices, which is
what we wanted to show.
\end{proof}

\begin{figure}
\includegraphics{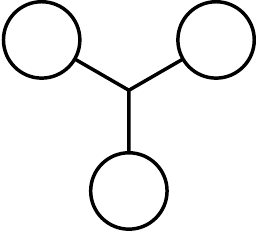}
\caption{The lollipop abstract tropical curve, which does not have a planar
embedding by Corollary~\ref{cor:lollipop-curve}.}
\label{fig:lollipop-curve}
\end{figure}

\begin{cor}\label{cor:lollipop-curve}
The ``lollipop curve'' shown in Figure \ref{fig:lollipop-curve} has crossing
number at least~$1$, for any lengths.
\end{cor}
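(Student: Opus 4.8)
The plan is to reduce directly to Proposition~\ref{prop:lollipop}, whose ``only if'' direction already characterizes exactly when a configuration of this type embeds in the plane. First I would locate in the lollipop curve of Figure~\ref{fig:lollipop-curve} a degree~$3$ vertex~$v$ whose removal disconnects the curve into three components $A$, $B$, and~$C$, and then check that each of these three components contains at least one trivalent vertex, so that the hypotheses of Proposition~\ref{prop:lollipop} are satisfied. This is the step that requires reading off the combinatorial structure from the figure: one must verify both that $v$ is genuinely a cut vertex separating the curve into three pieces, and that none of the three arms degenerates into a piece containing no trivalent vertex (for instance, a single infinite edge emanating from~$v$).

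Once the hypotheses are in place, the conclusion is essentially immediate. Proposition~\ref{prop:lollipop} asserts that $\Gamma$ admits a planar embedding if and only if each of $A$, $B$, and~$C$ contains a single trivalent vertex, in which case $\Gamma$ is the windmill graph of Figure~\ref{fig:windmill}. I would then observe that the lollipop curve is \emph{not} the windmill: because it contains a loop, at least one of the three components carries more than one trivalent vertex, so the windmill condition fails. Hence $\Gamma$ has no smooth planar embedding. Recalling the identification established at the start of Section~\ref{sec:gonality}, a tropical curve has crossing number zero precisely when it admits a smooth planar embedding, so the absence of such an embedding forces the crossing number to be at least~$1$, which is the claim.

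I expect the main obstacle to lie entirely in the first step, namely confirming that the chosen vertex~$v$ really does split the curve into three components each retaining a trivalent vertex; once that combinatorial verification is complete, the rest is a direct invocation of Proposition~\ref{prop:lollipop}. It is worth emphasizing that the entire argument depends only on the underlying graph and not on the metric, which is exactly why the bound holds \emph{for any lengths}, as stated in the corollary.
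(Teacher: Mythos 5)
Your overall strategy---reducing to Proposition~\ref{prop:lollipop}---is the same as the paper's (the corollary is stated there with no separate proof precisely because it is meant to follow from that proposition), but your execution has a genuine error at the key verification step. The lollipop curve of Figure~\ref{fig:lollipop-curve} is the genus-$3$ graph consisting of three loops joined to a central vertex~$v$ by three bridges. Removing $v$ does disconnect it into three components, but each component is a bridge plus a loop and therefore contains \emph{exactly one} trivalent vertex, namely the point where the loop meets the bridge. Your claim that ``because it contains a loop, at least one of the three components carries more than one trivalent vertex'' is false. Worse, under your literal reading of the proposition---planar embedding if and only if each component has a single trivalent vertex---the lollipop satisfies the right-hand side, so your argument, once this miscount is corrected, would conclude that the lollipop \emph{does} embed, the opposite of what is wanted.

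The way out is to use the full strength of the conclusion of Proposition~\ref{prop:lollipop}, which is what its proof actually establishes: any planar embedding forces $\Gamma$ to look like the curve in Figure~\ref{fig:windmill}, i.e., the three outer trivalent vertices $w_A$, $w_B$, $w_C$ each carry two \emph{unbounded} edges and there are no further trivalent vertices; in particular $\Gamma$ is a genus-$0$ tree. The lollipop is not of this form: each of its outer trivalent vertices lies on a loop (two finite edges returning to the same vertex), and the graph has genus~$3$. This contradiction uses only the combinatorics of the underlying graph and no metric data, so there is no smooth planar embedding for any choice of edge lengths, and the crossing number is at least~$1$. In short, the condition you need to check against is not ``each component has one trivalent vertex'' (the lollipop passes that test) but ``$\Gamma$ is the windmill curve,'' which the lollipop fails.
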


Corollary~\ref{cor:lollipop-curve} is a strengthening of the last sentence of
Proposition~2.3
from~\cite{bitangents}. In that paper, they studied smooth plane quartics
and found examples of
all combinatorial types of genus~3 graphs except for the lollipop graph.
Corollary~\ref{cor:lollipop-curve} further shows that the lollipop graph does
not have a planar embedding, even if the curve is not required to be quartic.

\begin{figure}
\includegraphics{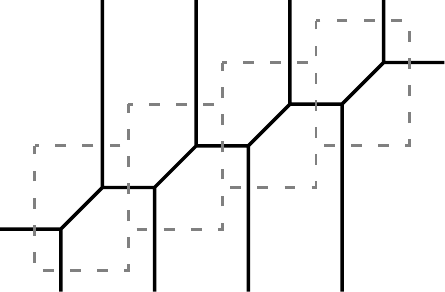}
\caption{Embedding of the caterpillar graph with $10$ leaves. Caterpillar graphs
with more leaves can also be embedded by adding more repetitions of the blocks
in the dashed lines.}
\label{fig:caterpillar}
\end{figure}

\begin{prop}\label{prop:tree}
Let $\Gamma$ be a genus~$0$ abstract tropical curve. Then $\Gamma$ has crossing
number~$0$ if and only if the underlying graph is a subdivision of the
caterpillar graph or the windmill graph.
\end{prop}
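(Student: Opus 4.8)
The plan is to prove both implications by reducing to the structure already developed in this section. Since a smooth plane tropical curve is trivalent at each of its finite vertices, an abstract curve with an essential vertex of degree at least $4$ can never have crossing number~$0$; as the caterpillar and windmill are trivalent, such a curve is also not a subdivision of either, so the equivalence holds vacuously there. Hence I may assume throughout that $\Gamma$ is a genus~$0$ tree all of whose essential vertices are trivalent, and it suffices to decide which such trees embed.

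For the \emph{only if} direction I would organize the argument around the location of the trivalent vertices. Recall the standard fact that a tree is a caterpillar exactly when all of its vertices of degree at least~$3$ lie on a single path; passing to the minimal subtree spanning the trivalent vertices, this is equivalent to saying that $\Gamma$ fails to be a caterpillar precisely when some trivalent vertex~$v$ has all three components of $\Gamma \setminus v$ containing a further trivalent vertex. So suppose $\Gamma$ has a planar embedding and is not a caterpillar, and pick such a vertex~$v$. The three components $A$, $B$, $C$ then satisfy the hypothesis of Proposition~\ref{prop:lollipop}, and since $\Gamma$ embeds, that proposition forces $A$, $B$, and~$C$ each to contain a single trivalent vertex. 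Thus $\Gamma$ has exactly four trivalent vertices arranged as in Figure~\ref{fig:windmill}, i.e.\ it is a subdivision of the windmill. This gives the forward implication: a genus~$0$ curve of crossing number~$0$ is either a caterpillar or the windmill.

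For the \emph{if} direction I would exhibit explicit smooth node-free embeddings valid for every assignment of edge lengths. The windmill is handled by the embedding of Figure~\ref{fig:windmill}: place the central vertex at the origin, send the three bounded edges along $(1,0)$, $(0,1)$, $(-1,-1)$ scaled by their lengths, and route the six legs to rays exactly as in Proposition~\ref{prop:lollipop}; for any positive lengths this is smooth and has no self-intersection. For caterpillars I would use the dual picture. By Proposition~\ref{prop:interior-points}, a genus~$0$ embedding is dual to a lattice polygon with no interior lattice points, and such polygons are, up to $\GL_2(\ZZ)$, either of lattice width~$1$ or the doubled simplex $2\,\mathrm{conv}\{(0,0),(1,0),(0,1)\}$; the latter produces the windmill and the former a caterpillar. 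Concretely, triangulating a width-$1$ polygon lying between $y=0$ and $y=1$ into a strip of unit triangles yields the staircase of Figure~\ref{fig:caterpillar}, whose spine is monotone in the $x$-coordinate and whose legs are the vertical rays in directions $(0,\pm 1)$ dual to the horizontal boundary edges. Monotonicity of the spine together with the pairwise disjoint vertical legs shows there are no crossings, and this configuration persists under rescaling, so every length assignment on a caterpillar embeds.

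I expect the main obstacle to be the caterpillar construction rather than the dichotomy, which follows cleanly from Proposition~\ref{prop:lollipop}. One must check that the staircase can be realized with integer edge directions satisfying the smooth local model at each trivalent vertex, that the repeating blocks of Figure~\ref{fig:caterpillar} assemble into an embedding for a spine of arbitrary length, and---most importantly---that no choice of edge lengths introduces a crossing. The cleanest way to control the last point is to keep the spine monotone in a fixed coordinate and all legs parallel to a single lattice direction, so that crossing-freeness reduces to a one-dimensional monotonicity statement independent of the actual lengths; verifying that this routing is everywhere compatible with the smooth model is the one delicate computation I would carry out in full.
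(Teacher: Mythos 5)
Your proposal is correct and takes essentially the same route as the paper: the forward direction rests on Proposition~\ref{prop:lollipop} applied at a trivalent vertex all of whose complementary components contain further trivalent vertices, and the reverse direction exhibits exactly the windmill and caterpillar-staircase embeddings of Figures~\ref{fig:windmill} and~\ref{fig:caterpillar}. Your additional material---the reduction to trivalent curves, the lattice-polygon classification, and the monotonicity argument showing the staircase embeds for arbitrary edge lengths---only fleshes out details the paper delegates to its figures; the one cosmetic slip is that the ``standard fact'' you quote is false for general trees (a spider has all its degree-$3$ vertices on a path yet is classically not a caterpillar), but the subdivision-invariant statement you actually use (the trivalent vertices lie on a common path iff the tree is a subdivision of a caterpillar graph) is exactly right, since a spider \emph{is} a subdivision of the $3$-leaf caterpillar.
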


\begin{proof}
In the case of the caterpillar graph or the windmill graph, the embedding can be
constructed as in Figures~\ref{fig:caterpillar} and~\ref{fig:windmill}
respectively. Conversely, if the tree~$\Gamma$ does have a planar embedding,
then we can apply Proposition~\ref{prop:lollipop} at each trivalent vertex, and
there are two possibilities. If, for some vertex~$v$, all components of $\Gamma
\setminus \{v\}$ have only a single trivalent vertex, then $\Gamma$ is a
windmill graph. On the other hand, if for all trivalent vertices~$v$, at least
one of the components of $\Gamma \setminus \{v \}$ has no trivalent vertices, then $\Gamma$ is a
caterpillar graph.
\end{proof}

\begin{prop}\label{prop:sun}
Let $\Gamma$ be a curve whose underlying graph is a sun: a cycle with $n$ edges
attached to the
cycle. Then the crossing
number of~$\Gamma$ is at least $\lceil n/2 \rceil - 4$ if $n > 9$ and this bound
is sharp for some edge lengths.
\end{prop}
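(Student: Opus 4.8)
The plan is to prove the lower bound using the dual Newton polygon and Scott's inequality, and to establish sharpness by exhibiting an explicit family of immersions.

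For the lower bound, fix any planar immersion $\iota(\Gamma)$ and let $\Delta$ be its dual Newton polygon, with $i$ interior lattice points and $b$ boundary lattice points. Since a sun has genus~$1$, Proposition~\ref{prop:interior-points} tells us that the number of nodes of $\iota(\Gamma)$, counted with multiplicity, equals $i-1$, so it suffices to bound $i$ from below. The first step is the inequality $b \geq n$. Indeed, the immersion is isometric to a tropical modification of~$\Gamma$, so it retains the $n$ infinite edges of the sun (and possibly more added by the modification); away from the nodes the curve is smooth, so each unbounded ray has weight one and is dual to a primitive boundary segment of~$\Delta$. Hence the lattice perimeter of~$\Delta$, which equals~$b$, is at least~$n$.

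The second ingredient is Scott's inequality~\cite{scott}: for a lattice polygon with $i \geq 1$ interior points one has $b \leq 2i+7$, with equality only for the triangle $\mathrm{conv}\{(0,0),(3,0),(0,3)\}$, whose boundary contains exactly $9$ lattice points. Since genus~$1$ forces $i \geq 1$, Scott's inequality applies; and since $n > 9$ gives $b \geq n \geq 10$, the polygon~$\Delta$ is not this exceptional triangle, so in fact $b \leq 2i + 6$. Combining the two bounds gives $i \geq (b-6)/2 \geq (n-6)/2$, and therefore the number of nodes is $i - 1 \geq (n-8)/2$. As the crossing number is an integer, it is at least $\lceil (n-8)/2 \rceil = \lceil n/2 \rceil - 4$, which is the claimed bound.

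For sharpness, the plan is to reverse this chain by producing an immersion whose dual polygon makes Scott's inequality an equality. When $n$ is even I take $\Delta$ to be the $2 \times \frac{n-4}{2}$ rectangle, which has $b = n$ and $i = (n-6)/2$, so that $b = 2i+6$; I then choose a subdivision into unimodular triangles together with unit parallelograms so that all $i$ interior points survive as vertices of the subdivision while consecutive unit parallelograms link the resulting loops into a single cycle, forcing the resolved abstract curve to have genus~$1$ with exactly $i-1 = \lceil n/2 \rceil - 4$ nodes. Reading off the Euclidean segment lengths of the dual curve then supplies the required edge lengths. For $n$ odd I use the near-extremal polygon obtained by truncating one corner of the rectangle with a unimodular triangle, which removes one boundary point without changing~$i$ and yields $b = 2i+5$ with $b = n$.

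I expect the sharpness construction to be the main obstacle. The lower bound is formal once Proposition~\ref{prop:interior-points} and Scott's inequality are in hand, but for the upper bound one must exhibit an explicit subdivision of the chosen Newton polygon whose dual tropical curve is genuinely a sun — a single genus-$1$ cycle carrying exactly $n$ unbounded rays, with every vertex trivalent — rather than some other genus-$1$ configuration with the same lattice-point count, and verify that its nodes number exactly $\lceil n/2 \rceil - 4$. Confirming that the resolved abstract curve has the sun's combinatorial type, and treating the two parities of~$n$ uniformly, is where the care is required.
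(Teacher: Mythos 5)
Your lower bound is complete and is precisely the paper's argument: the rays force $b \geq n$ boundary lattice points in the dual polygon, Proposition~\ref{prop:interior-points} with $g = 1$ gives (number of nodes) $= i - 1$, and since $b \geq n > 9$ rules out the exceptional triangle in Scott's theorem, one gets the refined bound $b \leq 2i + 6$, whence $i - 1 \geq n/2 - 4$ and integrality finishes. No issues there.

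The sharpness half is where you have a genuine gap, and you have flagged it yourself. Choosing the $2 \times \frac{n-4}{2}$ rectangle (or its truncation for odd $n$) only shows that the lattice-point counts are \emph{consistent} with an immersion having exactly $\lceil n/2 \rceil - 4$ nodes; it does not produce one. What must be exhibited is a concrete subdivision into unimodular triangles and $i-1$ unit parallelograms whose dual nodal curve, after resolving its nodes, is a tropical modification of the $n$-ray sun: the parallelograms must chain the $i$ bounded regions into a single cycle, and each of the $n$ boundary segments must be dual to a ray attached to that cycle in a way realizable as a modification of the sun. None of this follows from the counts, and it is exactly the step the paper carries out by explicitly exhibiting a family of immersions (Figure~\ref{fig:sun-general}): a repeating block pattern in which each block contributes one crossing and two infinite edges, so that $k$ blocks give $k$ crossings and $2k+8$ rays, and $k = \lceil n/2 \rceil - 4$ achieves the bound. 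Note also that the paper needs no separate odd-$n$ construction: since an immersion of $\Gamma$ is by definition a nodal curve isometric to a tropical \emph{modification} of $\Gamma$, the curve with $n+1$ rays already serves as an immersion of the $n$-ray sun when $n$ is odd, the extra ray being supplied by the modification. Adopting that observation would let you delete your truncated-rectangle case entirely, but you would still have to supply the explicit subdivision (or immersion) in the even case before the sharpness claim is proved.
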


\begin{proof}
If we have an immersion of~$\Gamma$
with $k$ nodes, then the dual triangulation will be a polygon with $k+1$
interior lattice points by Proposition~\ref{prop:interior-points}. Since each ray of the sun graph will produce an
unbounded edge, the polygon has at least $n$ edges and so at least $n$ lattice
points on its boundary. Scott's inequality is a
bound on
the number $b$ of lattice points on the boundary of a polygon in terms of the
number in the interior lattice points~\cite{scott} (see
also~\cite{haase-schicho}). When $n > 9$, so that $b \geq n > 9$, Scott's
inequality has the form
$b \leq 2 (k+1) + 6$. Since $n \leq b$, we can then
solve for $k$ to get the relation $k \geq n/2 -4$, from which the desired
inequality follows because the crossing number must be an integer.

\begin{figure}
\includegraphics{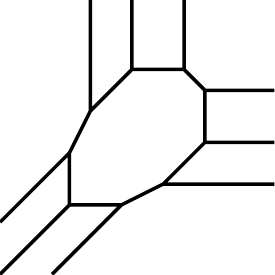}
\caption{Embedding of the sun curve with $9$ or fewer infinite edges.}
\label{fig:sun-9}
\end{figure}

\begin{figure}
\includegraphics{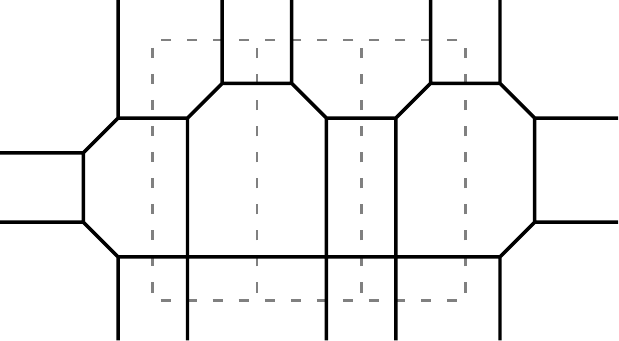}
\caption{Immersion of the sun curve with more than $9$ infinite edges. In the
example depicted,
the immersed tropical curve has $14$ infinite edges and thus $3$
crossings. The number of crossings can be varied by adding or removing blocks of
the form shown in the dotted boxes. Each such block adds two infinite edges and
one crossing.}
\label{fig:sun-general}
\end{figure}

To show that this bound can be sharp, we give examples of immersions of
tropical curves which
achieve them.
For $n\leq 9$, the embedding in Figure~\ref{fig:sun-9} justifies the requirement
of $n > 9$ from the proposition statement.
On the other hand, the pattern in Figure~\ref{fig:sun-general} with
$k$ of the dotted blocks will give an immersion with $k$~crossings and $2k + 8$
infinite edges. For $n > 9$, by taking $k = \lceil n / 2 \rceil - 4$, we have an
example which has $n$ or $n+1$ infinite edges and achieves the crossing number
bound from the proposition statement.
\end{proof}

\begin{prop}\label{prop:symmetric-theta}
Let $\Gamma$ be the theta graph as in the right of Figure~\ref{fig:theta-example}, with all edge lengths equal.
Then $\Gamma$ has crossing number~$1$.
\end{prop}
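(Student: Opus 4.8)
The proof has two halves: the upper bound $\le 1$ and the lower bound $\ge 1$. For the upper bound I would simply invoke the immersion already exhibited in Example~\ref{ex:theta-immersion}: the leftmost curve of Figure~\ref{fig:theta-example} is a nodal tropical curve with a single node, of multiplicity one, whose resolution is the symmetric theta graph. Hence its tropical crossing number is at most~$1$, and it only remains to rule out a smooth planar embedding.

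Suppose for contradiction that such an embedding existed. Since $\Gamma$ has genus~$2$ and the embedding has no nodes, Proposition~\ref{prop:interior-points} gives that the dual Newton polygon~$\Delta$ has exactly $i = g = 2$ interior lattice points, say $P_1$ and $P_2$, dual to the two bounded regions of the complement. In any planar theta graph the three arcs joining the two trivalent vertices $u$ and $w$ bound exactly these two regions together with the unbounded region, and one distinguished arc---the one separating the two bounded regions---is dual to an edge of~$\Delta$ joining $P_1$ to $P_2$. As there is a unique such edge and it contains no interior lattice points, this middle arc must be a single unbent segment. After a change of coordinates from $\GL_2(\ZZ)$ I would place $u = (0,0)$ and $w = (c,0)$, with the middle arc along the segment between them.

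The next step is a local analysis at the trivalent vertices, in the spirit of the proof of Proposition~\ref{prop:lollipop}. Each bounded region is convex, since the angle between any two rays of a smooth local model is less than $180^\circ$. At $u$ the three primitive outgoing directions sum to zero, and the middle arc contributes $(1,0)$, so the initial directions $v_{\mathrm{top}}$ and $v_{\mathrm{bot}}$ of the other two arcs satisfy $v_{\mathrm{top}} + v_{\mathrm{bot}} = (-1,0)$. Because $\{(1,0), v_{\mathrm{top}}\}$ must be a lattice basis, one has $v_{\mathrm{top}} = (p, 1)$ and $v_{\mathrm{bot}} = (-1-p, -1)$ for some integer~$p$ (after choosing which region lies above). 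Since $p$ and $-1-p$ cannot both be nonnegative, at least one of the two side arcs leaves $u$ with negative $x$-coordinate, that is, heading into the half-plane away from~$w$; the same constraint holds at~$w$.

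The main obstacle, and the crux of the proposition, is to convert this backward-heading phenomenon into a contradiction with the equal-length hypothesis. The plan is to show that a convex lattice arc from $u$ to $w$ which starts by heading away from $w$ must, once it is forced to close up consistently with the smooth local model at both endpoints and with the second convex region, either have metric length strictly greater than the length $c$ of the straight middle arc, or else create a crossing or a nonconvex region. Since all three edges of the symmetric theta have the same length, any of these alternatives is impossible. The delicate point is that in the lattice metric a purely transverse excursion (a ``tent'' over the middle segment) costs no extra length, so the argument cannot rely on a naive shortest-path inequality; instead one must combine the convexity of both bounded regions, the balancing conditions at $u$ and~$w$, and planarity to eliminate every local configuration. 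This reduces to a finite check over the integer parameter~$p$ together with the admissible directions at~$w$, and it identifies the all-equal length vector as precisely the excluded point of the realizability region of the theta graph, consistent with the genus~$2$ classification in Proposition~\ref{prop:genus-2} and in~\cite{bjms}.
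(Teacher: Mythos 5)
Your upper bound is exactly the paper's: the immersion of Example~\ref{ex:theta-immersion} has a single node of multiplicity one, so the crossing number is at most~$1$. Your setup for the lower bound also begins along the same lines as the paper: by Proposition~\ref{prop:interior-points} the Newton polygon~$\Delta$ of a hypothetical embedding has exactly two interior lattice points, and the middle edge of the theta graph maps to a single segment dual to the edge of the subdivision joining them. But the proof stops where the actual difficulty begins. Your final paragraph is a plan, not an argument: the claimed trichotomy (a backward-heading side arc forces extra lattice length, or a crossing, or a nonconvex region) is never proved, and as you yourself observe, the naive length inequality fails in the lattice metric, so something must replace it. The proposed ``finite check over the integer parameter~$p$ together with the admissible directions at~$w$'' is not shown to be finite: $p$ ranges over all of~$\ZZ$, and a priori the side arcs can bend arbitrarily many times, so without further constraints there is no finite list of configurations to examine. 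Nothing in your write-up uses the equal-length hypothesis at a specific, checkable step, yet that hypothesis is precisely what separates this case from the embeddable cases $a < b \le c$ and $a = b < c$ of Proposition~\ref{prop:genus-2}, so any correct proof must invoke it concretely.

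For comparison, the paper closes this gap in two moves. First, Lemma~\ref{lem:newton-polygon-genus-2} pins down~$\Delta$: after an affine change of coordinates the interior points are $(0,0)$ and $(1,0)$ and $\Delta$ lies in $-1 \le y \le 1$, $x \le 2$ (and, in a second coordinate system, in $-1 \le y \le 1$, $x \ge -1$). This is what makes the situation finite. Second---and this is where equal lengths enter---in the first coordinate system every dual edge from $(1,0)$ to a point with first coordinate greater than~$1$ must end at one of $(2,1)$, $(2,0)$, $(2,-1)$, so the corresponding segments of $\iota(e_3)$ all have primitive direction with second coordinate~$-1$; hence the lattice length of the rightward-facing part of~$e_3$ equals its total height, which is at most the height of the vertical segment $\iota(e_2)$, i.e.\ the common edge length. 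Equality of the lengths of $e_2$ and~$e_3$ then forces all of~$e_3$ to face right, which pins the two triangles adjacent to the edge from $(0,0)$ to $(1,0)$ at third vertices $(2,1)$ and $(2,-1)$, with midpoint $(2,0)$. Running the same argument for~$e_1$ in the second coordinate system pins them at $(-1,\pm 1)$, with midpoint $(-1,0)$; since the two coordinate systems agree on the $y=0$ line, these conclusions are incompatible---a contradiction. This height computation, or something equally concrete exploiting the equal lengths, is the ingredient your proposal is missing.
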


The first step in the proof of Proposition~\ref{prop:symmetric-theta} is the following
Lemma~\ref{lem:newton-polygon-genus-2}, which constrains the possible shapes of
the Newton polygon of an embedding of the theta graph, or of any other genus~$2$
graph. The possibilities in the conclusion of
Lemma~\ref{lem:newton-polygon-genus-2} are illustrated in
Figure~\ref{fig:genus-2-polygon}.

\begin{lem}\label{lem:newton-polygon-genus-2}
Let $\Delta$ be the Newton polygon dual to a smooth embedding of  an
abstract tropical curve of genus two. Then we can choose an affine change of coordinates such
that the interior points of~$\Delta$ are $(0,0)$ and $(1,0)$ and such that
either of the following inequalities hold:
\begin{enumerate}
\item The points of $\Delta$ are bounded by $-1 \leq y \leq 1$ and $x \leq 2$.
\item The points of $\Delta$ are bounded by $-1 \leq y \leq 1$ and $x \geq -1$.
\end{enumerate}
Moreover, the transformation changing from coordinates satisfying (1) to those
satisfying (2) acts as the identity on the $y=0$ line.
\end{lem}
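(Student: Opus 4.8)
The plan is to translate everything into the combinatorics of the dual lattice polygon $\Delta$ and to control it by forbidding stray interior lattice points. First I would record that a smooth embedding has no nodes, so Proposition~\ref{prop:interior-points} (with $n=0$) gives that $\Delta$ has exactly $i = g + n = 2$ interior lattice points. Since the open segment joining two interior points lies in the interior of a convex set, any lattice point on it would be a third interior point; hence the two interior points differ by a primitive vector, and after an element of $\GL_2(\ZZ)$ together with a translation I may assume they are $(0,0)$ and $(1,0)$. This is exactly the normalization demanded by the statement.

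The heart of the matter is the strip bound $-1 \le y \le 1$. Because the vertices of $\Delta$ are lattice points, it suffices to show $\Delta$ contains no lattice point $(a,b)$ with $|b| \ge 2$; by the symmetry $y \mapsto -y$ I may take $b \ge 2$. The idea is to manufacture a forbidden interior lattice point of positive height. I would examine the triangle $T$ with vertices $(0,0)$, $(1,0)$, $(a,b)$: it lies in $\Delta$, and since $(0,0)$ and $(1,0)$ are interior, the relative interior of $T$ lies in the interior of $\Delta$, so any lattice point of $T$ with positive $y$-coordinate is a third interior point of $\Delta$, a contradiction. The lattice points on the two slanted edges are governed by $\gcd(a,b)$ and $\gcd(a-1,b)$: if either exceeds $1$, an interior lattice point of positive height appears on that edge. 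Otherwise both gcds equal $1$, the slanted edges carry no extra lattice points, so Pick's Theorem (Proposition~\ref{prop:pick}) applied to $T$, whose area is $b/2$, forces an interior lattice point as soon as $b \ge 3$; and the case $b = 2$ is incompatible with both gcds being $1$ on parity grounds, so there one gcd is at least $2$. Either way a third interior point materializes, so no such $(a,b)$ exists and $\Delta \subseteq \{-1 \le y \le 1\}$.

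With the strip established, I would bound the $x$-extent by a shear. Since $\Delta \subseteq \{-1 \le y \le 1\}$ and its vertices are lattice points, every vertex sits in a row $y \in \{-1,0,1\}$. The line $y=0$ crosses the interior, and the requirement that its only interior lattice points be $0$ and $1$ pins the $y=0$ slice inside $-1 \le x \le 2$; in particular its right end satisfies $x_+ \le 2$. Writing $X_1$ and $X_{-1}$ for the rightmost $x$-coordinates in the rows $y = 1$ and $y = -1$ (these are integers, being coordinates of vertices or endpoints of the top and bottom edges, which are the only way $\Delta$ can reach $y=\pm 1$), concavity of the right boundary gives $X_1 + X_{-1} \le 2x_+ \le 4$. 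A shear $(x,y) \mapsto (x + ky, y)$ with $k \in \ZZ$ fixes the line $y=0$ and the two interior points and sends $X_1 \mapsto X_1 + k$, $X_{-1} \mapsto X_{-1} - k$; because $X_1 + X_{-1} \le 4$ with integer endpoints, I can choose $k$ making both at most $2$, which places every vertex, and hence all of $\Delta$, in $\{x \le 2\}$, giving alternative~(1). The mirror argument on the left boundary (using $x_- \ge -1$) yields alternative~(2), and since both normalized frames are obtained from the centered coordinates by shears fixing $y=0$, so is the transformation relating them; this is the ``moreover'' clause.

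The main obstacle is the strip bound: the naive ``take a midpoint of $(0,0)$ or $(1,0)$ with $(a,b)$'' trick only lands on a lattice point at height $1$ when $b=2$ with a favorable parity, so the real work is the uniform argument over all $b \ge 2$, where the interplay of the two gcd conditions with Pick's formula closes every case. The $x$-bound is, by comparison, a clean finite optimization once one notices that the extreme points of $\Delta$ occur at integer heights, so the integer shear only needs to reconcile finitely many integer coordinates.
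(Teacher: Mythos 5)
Your proof is correct, and it shares the paper's skeleton: apply Proposition~\ref{prop:interior-points} to get exactly two interior lattice points, normalize them to $(0,0)$ and $(1,0)$, rule out points with $|y| \geq 2$ by producing a forbidden third interior lattice point, and obtain each horizontal bound via an integral shear fixing the $y=0$ line, so that the ``moreover'' clause comes for free. The two key steps are executed differently, however. For the strip bound, the paper replaces your gcd/Pick case analysis with a single explicit construction: for a vertex $(a,b)$ with $b > 1$ it writes the lattice point $(\lceil a/b \rceil, 1)$ as the convex combination $\left(1 - r - \tfrac{1}{b}\right)(0,0) + r(1,0) + \tfrac{1}{b}(a,b)$ with $r = \lceil a/b \rceil - a/b$, which is therefore interior --- a contradiction that handles all $b \geq 2$ uniformly, whereas your route needs a separate parity observation at $b = 2$. (Also, your sentence ``any lattice point of $T$ with positive $y$-coordinate is a third interior point'' overstates the situation: the vertex $(a,b)$ itself need not be interior to $\Delta$; your actual argument only uses lattice points on the open slanted edges or in the interior of $T$, so nothing is lost.) For the horizontal bound, the paper shears the rightmost vertex of the $y = 1$ row to $(2,1)$ and then argues that any vertex $(a,b)$ with $a > 2$ and $b \in \{0,-1\}$ would force $(2,0)$ to be interior; you instead bound the $y = 0$ slice inside $[-1,2]$ by the same interior-point reasoning, invoke concavity of the right support function to get $X_1 + X_{-1} \leq 2x_+ \leq 4$, and solve an integer inequality to find the shear. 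Your averaging argument makes the existence of the integer shear transparent, while the paper's choice pins the shear down concretely but leaves the $b = -1$ case of its contradiction somewhat compressed. Both are complete proofs of the lemma.
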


\begin{figure}
\includegraphics{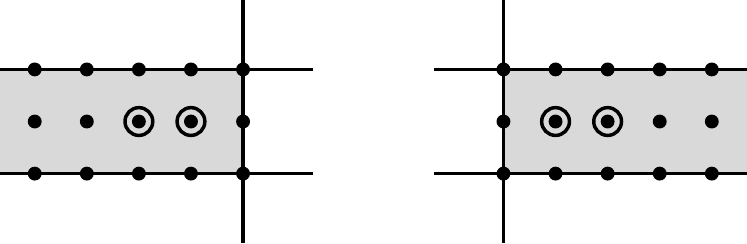}
\caption{Lemma~\ref{lem:newton-polygon-genus-2} says that the Newton polygon can
be constrained to one of these two shaded regions. In either case, the interior
points of the Newton polygon are the circled dots, which have coordinates $(0,
0)$ and $(1,0)$.}\label{fig:genus-2-polygon}
\end{figure}

\begin{proof}
Since the embedding, by
definition, has no nodes, Proposition~\ref{prop:interior-points} implies that
$\Delta$ has exactly two integral points in its interior. After an affine change
of coordinates which is common to both (1) and (2) from the statement, we can
assume that these two interior points are $(0, 0)$ and $(1,0)$. Now suppose that
there is a vertex of the polygon $(a,b)$ with $b
> 1$. We claim that the integral point $v = (\lceil a/b \rceil, 1)$ would
also be in the interior of~$\Delta$, which would be a contradiction. To see this
claim, let $r = \lceil a/b\rceil - a/b$, and note that $0 \leq r \leq 1 - 1/b$.
We can then write:
\begin{equation*}
v = \left(1 - r - {\textstyle\frac{1}{b}}\right) (0, 0)
+ r (1, 0)
+ \left({\textstyle\frac{1}{b}}\right) (a,b),
\end{equation*}
which is a convex linear combination by the previously noted inequalities. Since
$(0, 0)$, $(1,0)$, and $(a,b)$ are all in~$\Delta$, so is $v$. Moreover, the
former two points are in the interior of~$\Delta$, and $v$ is not equal to
$(a,b)$, so $v$ is in the interior of~$\Delta$.
Thus, we've proved the desired claim, and so there can be
no interior lattice point $(a,b)$ with $b > 1$. By symmetry, there are no vertices $(a,b)$
in~$\Delta$ with $b < -1$ either, and so we've proved the first half of either
set of bounds.

We now turn to finding bounds on the horizontal extent of~$\Delta$.
Suppose that $(a,1)$ is the rightmost vertex of~$\Delta$ on the $y = 1$ line. We
now change coordinates by sending $(x,y)$ to $(x - (a-2)y, y)$, after which
$(2,1)$ is the rightmost such point. We now claim that all points in~$\Delta$
are bounded by $x \leq 2$. On the $y=1$ line, this is by our change of
coordinates, and if there exists a vertex $(a, b)$ with $a > 2$ and either $b =
0$ or $b=-1$, then the vertex $(2,0)$ would be in the interior of~$\Delta$,
which is a contradiction.
Therefore, we've found coordinates satisfying the bounds in~(1).

To find the second set of coordinates, we start with the leftmost vertex on the
$y=1$ and apply an analogous change of coordinates, which is the identity on the
$y=0$ line and after which $\Delta$ is bounded by $x \geq -1$, as desired.
\end{proof}

\begin{proof}[Proof of Proposition~\ref{prop:symmetric-theta}]
The immersion from Example~\ref{ex:theta-immersion} shows that the crossing
number is at most~$1$, and so it remains to show that there is no planar
embedding of~$\Gamma$. 

We assume for the sake of contradiction that we have a planar embedding~$\iota$.
We consider the dual triangulation of the Newton polygon~$\Delta$, for which we
first assume we have coordinates for which $\Delta$ is bounded as in
Lemma~\ref{lem:newton-polygon-genus-2}(1). Since $\Gamma$ is a theta graph,
there must be an edge of the embedding separating the two bounded regions.
Dually, the bounded regions correspond to the points $(0,0)$ and $(1,0)$, so
there must be an edge of the triangulation between them, which then corresponds
to a vertical edge in $\iota(\Gamma)$. The triangles above and below the edge joining $(0,0)$ and $(1,0)$
correspond to the two trivalent vertices of~$\Gamma$. We label the edges of
$\Gamma$ as $e_1$, $e_2$, and~$e_3$, such that $\iota(e_2)$ is the vertical
edge, and the regions to the left and right of this edge are bounded by
$\iota(e_1 \cup e_2)$ and $\iota(e_2 \cup e_3)$ respectively.

Now we consider the subset $e_3'$ of~$e_3$ consisting of points $p$ of~$e_3$
such that $\iota(p) + (\epsilon, 0)$ is in an unbounded region or unbounded ray for all
sufficiently small~$\epsilon$. Equivalently, $\iota(e_3')$ is the union of the
segments of $\iota(e_3)$ whose dual in the triangulation are edges connecting
$(1,0)$ and a point with first coordinate greater than~$1$.
By the inequalities of Lemma~\ref{lem:newton-polygon-genus-2}(1), the
only possibilities for the second point are $(2,1)$, $(2,0)$, and $(2,-1)$, which
correspond to edges in $\iota(e_3')$ parallel to the vectors $(1,-1)$,
$(0,-1)$ and $(-1,-1)$, respectively. All of these vectors have $-1$ in the
second coordinate, so the total length of the segment~$e_3'$
equals the height of~$\iota(e_3')$. Since the length of~$e_3$ equals the length
of $e_2$, which equals the height of the vertical segment $\iota(e_2)$, we know that  $e_3'$
must consist of all of~$e_3$.
As a consequence, the only possible endpoints of edges of the triangulation containing $(1,0)$ are
$(0,0)$, $(2,1)$, $(2,0)$, and $(2,-1)$. Thus, the triangles
above and below the edge from $(0,0)$ to~$(1,0)$ in the triangulation must
contain the vertices $(2,1)$ and $(2,-1)$ respectively. The midpoint of these
two vertices will be $(2,0)$ and note that this midpoint is preserved under
linear changes of coordinates which also preserve the $y=0$ line.

Second, we consider coordinates such that $\Delta$ is bounded as in
Lemma~\ref{lem:newton-polygon-genus-2}(2). By symmetry, the same argument
applied to~$e_1$ shows that, in these coordinates, the triangles above and below
the edge between $(0,0)$ and $(1,0)$ have their third vertices at $(-1, 1)$ and $(-1, -1)$, respectively.
The
midpoint of these two vertices is $(-1,0)$, which would remain true when
changing to the coordinates as in Lemma~\ref{lem:newton-polygon-genus-2}(1).
Therefore, we have a contradiction with the previous paragraph, so there is no
embedding of the graph~$\Gamma$, so its crossing number is~$1$.
\end{proof}

\begin{prop}\label{prop:genus-2}
Let $\Gamma$ be a stable tropical curve of genus two. Then $\Gamma$ has
tropical crossing number $0$ unless $\Gamma$ is the theta graph in
Figure~\ref{fig:theta-example}
with all edge lengths equal, in which case it has crossing number~$1$.
\end{prop}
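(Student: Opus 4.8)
The plan is to classify the stable genus-two graphs, show that all of them except the symmetric theta graph admit a planar embedding by explicit construction, and then invoke Proposition~\ref{prop:symmetric-theta} to handle the single exceptional case. First I would recall that there are only two combinatorial types of trivalent (stable) graphs of genus two: the theta graph (two vertices joined by three parallel edges) and the dumbbell graph (two loops joined by a bridge), which is the same as the ``figure eight'' once we account for the bridge. By the stability hypothesis, every vertex has degree exactly~$3$, so these are indeed the only two underlying graph structures, and an abstract tropical curve of genus two is one of these with a choice of edge lengths.

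The main work is to produce planar embeddings (crossing number~$0$) in all cases \emph{except} the symmetric theta graph. For the dumbbell graph this should be routine: the underlying graph is planar and one expects a direct construction analogous to the embeddings in Figures~\ref{fig:sun-9} and~\ref{fig:windmill}, placing the two loops side by side and routing the bridge between them with segments parallel to $e_0,e_1,e_2$; the flexibility in edge lengths should allow the balancing conditions at each trivalent vertex to be satisfied for \emph{all} length assignments. For the theta graph, the interesting point is that a planar embedding exists for \emph{generic} (unequal) edge lengths and fails only when all three lengths coincide. I would therefore exhibit an explicit embedding of the theta graph valid whenever the lengths are not all equal, using the freedom in the lattice-width/Newton-polygon data from Lemma~\ref{lem:newton-polygon-genus-2}: the two cases~(1) and~(2) of that lemma correspond to genuinely different triangulations of~$\Delta$, and the rigidity forcing the contradiction in Proposition~\ref{prop:symmetric-theta} came precisely from equating the length of $e_3$ (hence $e_2$) across the two coordinate systems. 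When the three edge lengths differ, the heights forced on $\iota(e_1')$ and $\iota(e_3')$ need no longer agree, so the two symmetric constraints can be satisfied simultaneously and a triangulation with two interior points and no parallelogram exists; dualizing it yields the smooth embedding.

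The hard part will be pinning down exactly which length assignments admit an embedding and verifying that the metric realizes correctly, i.e.\ checking that the candidate Newton-polygon triangulation dualizes to an embedded curve whose edge lengths match the prescribed ones rather than merely to a curve of the right combinatorial type. Concretely, I would trace through the argument of Proposition~\ref{prop:symmetric-theta} and observe that its contradiction used \emph{only} the equal-length hypothesis: the midpoints $(2,0)$ and $(-1,0)$ clashed because the two edges $e_1,e_3$ were forced to the same length as $e_2$. In the unequal case one instead gets a consistent solution, and I would read off the explicit embedding from the resulting triangulation (by a construction parallel to Example~\ref{ex:theta-immersion}, but with the node resolved away). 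Finally, combining the embeddings for the dumbbell and the non-symmetric theta with the crossing-number-one result of Proposition~\ref{prop:symmetric-theta} for the symmetric theta completes the proof.
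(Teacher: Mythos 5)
Your overall skeleton is the same as the paper's: there are exactly two trivalent genus-two graphs (the theta graph and the barbell/dumbbell), the barbell embeds for every length assignment, the theta embeds whenever the lengths are not all equal, and the all-equal theta is handled by Proposition~\ref{prop:symmetric-theta}. The barbell discussion, though only sketched, is genuinely routine and matches the paper's Figure~\ref{fig:barbell}.

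The gap is in the theta graph with unequal lengths, which is the heart of the proposition. Your argument for existence of an embedding there is essentially that the contradiction in Proposition~\ref{prop:symmetric-theta} ``used only the equal-length hypothesis,'' so the two constraints ``can be satisfied simultaneously and a triangulation with two interior points and no parallelogram exists; dualizing it yields the smooth embedding.'' Absence of that particular obstruction does not establish existence. More importantly, dualizing a triangulation of a Newton polygon only produces a \emph{combinatorial type} of smooth plane tropical curve; whether the prescribed metric $(a,b,c)$ is realized depends on where the vertices are actually placed (equivalently, on the heights of the regular subdivision), together with the extra rays introduced by tropical modification. That metric realizability is exactly the nontrivial content here: the theta combinatorial type is always realizable as a smooth plane curve, yet the symmetric metric on it is not, so no argument that ignores the lengths can close this case. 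You flag this verification as ``the hard part'' and defer it; until it is carried out, the proposition is not proved. The paper resolves it by exhibiting explicit embeddings, and notably needs \emph{two} different constructions, one for $a < b \leq c$ (Figure~\ref{fig:theta-unequal}) and another for $a = b < c$ (Figure~\ref{fig:theta-equal}); your proposal treats ``not all equal'' as a single case and, as written, would not produce either construction.
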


\begin{figure}
\includegraphics{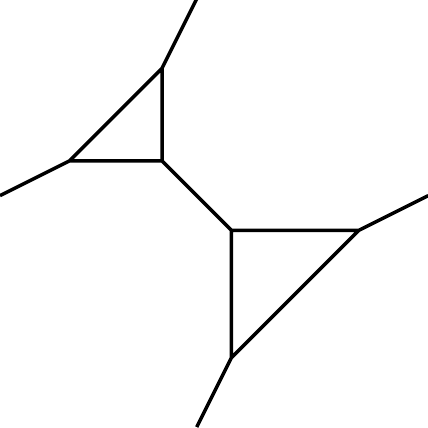}
\caption{Embedding of the barbell graph.}\label{fig:barbell}
\end{figure}

\begin{figure}
\includegraphics{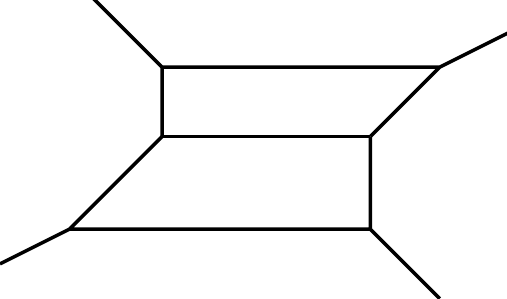}
\caption{Embedding of the theta graph when
$a < b \leq c$. In this figure, the middle edge has
length~$a$.}\label{fig:theta-unequal}
\end{figure}

\begin{figure}
\includegraphics{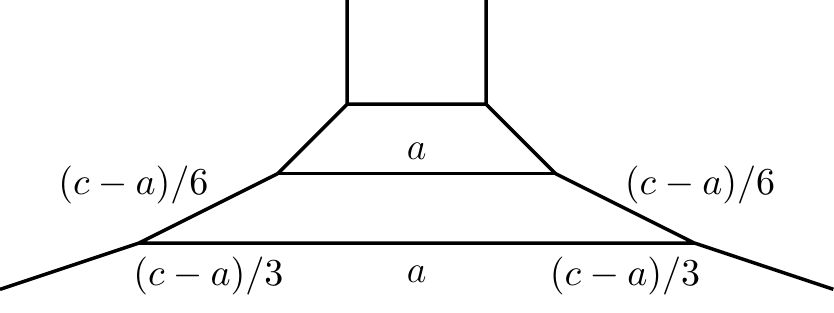}
\caption{Embedding of the theta graph with edge lengths~$a$, $b$, and~$c$ when
$a = b<c$.}\label{fig:theta-equal}
\end{figure}

\begin{proof}
There are two combinatorial types of trivalent graphs of genus~$2$. In the case
of the barbell graph, we can take the planar embedding shown in
Figure~\ref{fig:barbell} for
all possible edge lengths. For the theta graph in Figure~\ref{fig:theta-example}, there are three
possibilities depending on the edge lengths~$a$, $b$, and~$c$.
By symmetry, we can assume that $a
\leq b \leq c$. If we have a strict inequality $a < b$ or if $a = b < c$, then
we can use the embeddings in Figures~\ref{fig:theta-unequal}
and~\ref{fig:theta-equal} respectively. Otherwise, all of the edge lengths are
equal, and the crossing number is~$1$ by Proposition~\ref{prop:symmetric-theta}.
\end{proof}

\begin{figure}
\includegraphics{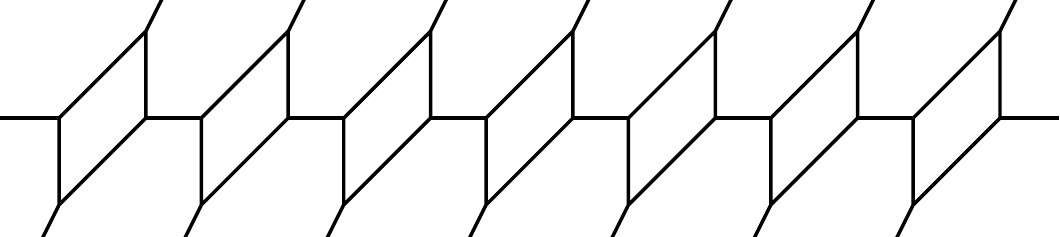}
\caption{Embedding without crossings of a chain of $7$ loops when both halves of
each loop have the same length. The same pattern can be continued to give an
embedding if the graph is extended to have arbitrarily many
loops.}\label{fig:chain-loops}
\end{figure}

\begin{rmk}\label{rmk:semicontinuity}
The example of the theta graph in Proposition~\ref{prop:genus-2} shows that
the crossing number can increase for specializations of the metric parameters,
i.e.\ the crossing number function is not lower semi-continuous. Moreover, it is
not upper semi-continuous because the crossing
number can jump down in specializations as well. For example, the chain of $g \geq 15$ loops
with generic edge lengths has positive crossing number by
Corollary~\ref{cor:chain-loops}, but if all edge lengths are equal then it is
planar by the embedding shown in Figure~\ref{fig:chain-loops}. Also, a consequence of the proof of Proposition~\ref{prop:sun} is that
any embedding of a sun curve with $9$ infinite edges is equivalent to that in
Figure~\ref{fig:sun-9}, up to change of coordinates in $\GL_2(\mathbb{Z})$. However, such an embedding
implies non-trivial conditions among the lengths of the edges of the cycle.
\end{rmk}

For curves of higher genus, we refer to~\cite[Sec.~5--8]{bjms}, where curves of
crossing number $0$ are characterized using computational techniques. In
particular, they show that it is feasible to enumerate the possible Newton
polygons of low genus curves and from these compute the defining inequalities of
the tropical curves which can then arise.

\section{Algebraic realizations of tropical curves}\label{sec:realizations}

In this section, we consider applications of our results to realizations of
tropical curves by algebraic curves. We recall that for any curve~$C \subset
\Gm^2$ over a field~$K$ with valuation, the tropicalization of~$C$ is a union of
finitely many edges in~$\RR^2$.
One characterization of the tropicalization $C \subset \Gm^2$ is as the
projection of the Berkovich analytification~$C^\an$ using the valuations of the
coordinate functions of~$\Gm^2$. We are interested in cases when this map
preserves the skeleton of the analytification in the following sense:

\begin{defn-boxless}\label{def:faithful}
Let $C \subset \Gm^2$ be a nodal curve over a field~$K$ with non-trivial
valuation and let $\widetilde C$ be the normalization of~$C$. We say that that
$C$ has \defi{totally faithful tropicalization} if there exists a finite set of
points $p_1, \ldots, p_n \in \RR^2$ such that:
\begin{enumerate}
\item The skeleton of~$\widetilde C$ maps isometrically onto the tropicalization
of~$C$, except at $p_1, \ldots, p_n$.
\item At each point $p_i$, the tropicalization of~$C$ is a node, as in
Definition~\ref{def:nodal}, and the number of nodes of~$C$ tropicalizing
to~$p_i$ is equal to the multiplicity of the node at~$p_i$, as
in~(\ref{eq:node-multiplicity}). \qed
\end{enumerate}
\end{defn-boxless}

Definition~\ref{def:faithful} is a generalization to nodal curves of the
definition given in~\cite{log-deformation}. Baker, Payne, and Rabinoff have
shown that any algebraic curve~$C$ has a faithful tropicalization in the sense
that there's an open subset~$C' \subset C$ and an embedding $C' \rightarrow
\Gm^n$ such that the map from the analytification~$(C')^\an$ is an isometry on
the skeleton of~$C$, but this is not necessarily a totally faithful
tropicalization because the skeleton of $C'$ is larger than that of~$C$ when $C'
\subsetneq C$. Theorem~\ref{thm:faithful-immersion} constructs totally faithful
tropicalizations, but working in reverse, beginning with an abstract tropical
curve and constructing the algebraic curve.

\begin{figure}
\includegraphics{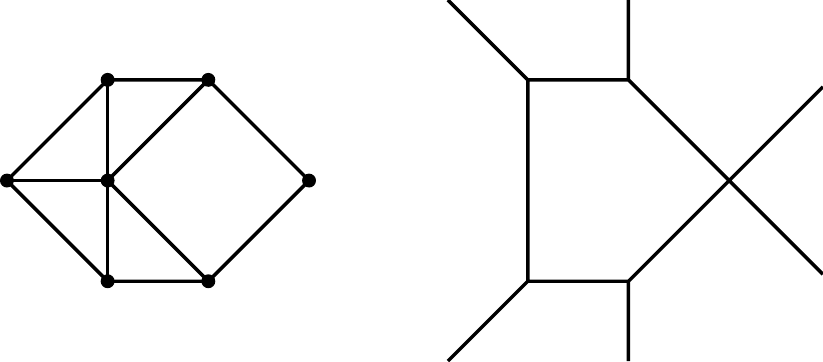}
\caption{On the right is a nodal tropical curve which is the totally faithful
tropicalization of the nodal rational curve given in Example~\ref{ex:faithful}.
On the left is the dual subdivision of the Newton polygon for this curve.}
\label{fig:faithful}
\end{figure}

\begin{ex}\label{ex:faithful}
Let $K = \mathbb C[[t]]$ and let $C$ be the curve in $\Gm^2$ defined by the
equation:
\begin{multline*}
(1+t)^2 t^2 x^3 y + t x^2 y^2 + (3t^4+3t^3+2t+t+1)x^2y  - xy^2 \\
+ tx^2 + (3t^3+t-2)txy  - x + (1-t)^2(1+t+t^2)y.
\end{multline*}
The dual subdivision of the Newton polygon and the corresponding tropical curve
are shown in Figure~\ref{fig:faithful}. The tropical curve is nodal with a
single
node of multiplicity~$2$ at the point $(0,0)$. While a generic curve with this Newton polygon would
have genus~$2$, one can check that $C$ has nodal singularities at the two
points:
\begin{equation*}
\big((1-t)/(1+t), 1\big) \quad\mbox{and}\quad \big((-1-t)/(1+t),-1\big),
\end{equation*}
and therefore, $C$ is rational. Both of these singularities have coordinates
with valuation~0 and therefore lie above the multiplicity~$2$ node in the
tropicalization.
Therefore, the nodal curve~$C$ has totally faithful tropicalization.
\end{ex}

\begin{proof}[Proof of Theorem~\ref{thm:faithful-immersion}]
By Theorem~\ref{thm:immersion-embedding}, $\Gamma$ admits a planar immersion
$\iota\colon \Gamma \rightarrow \RR^2$ and by
Remark~\ref{rmk:immersion-embedding-extras}, we can assume that the vertices in
this immersion have rational coordinates. Then, it will be sufficient to show
that this nodal plane curve is the tropicalization of some algebraic curve, for
which we use Mikhalkin's correspondence theorem~\cite{mikhalkin}, and
more specifically the algebraic version due to Shustin~\cite[Thm.~3]{shustin}.
While the cited theorem only asserts that a certain count of nodal algebraic
curves equals a weighted count of the corresponding tropical curves, the proof
works by constructing at least one algebraic curve for each nodal tropical
curve.
In particular,
\cite[Sec.~3.7]{shustin} shows that for any nodal tropical
curve~$\iota(\Gamma)$, it is possible to find certain auxiliary data, denoted by
$S$, $F$, and~$R$ in that paper. Then, \cite[Lem.~3.12]{shustin} states that from
$\iota(\Gamma)$, together with this auxiliary data,
we can find a nodal algebraic curve tropicalizing to~$\iota(\Gamma)$.
\end{proof}

\bibliographystyle{alpha}
\bibliography{tropical_crossing}

\end{document}